\documentclass[12pt]{amsart}

\usepackage{amsmath,amssymb,amsthm,amscd,amsfonts}
\usepackage{verbatim}
\usepackage{comment}
\usepackage{multirow}
\usepackage{mathtools}
\usepackage{enumitem}
\usepackage{pgf,tikz}
\usepackage{tikz-cd}
\usetikzlibrary{arrows}
\usetikzlibrary{cd} 
\usepackage[normalem]{ulem}
\usepackage{graphicx}
\usepackage{wasysym}
\usepackage{csquotes}
\usepackage{bbm}
\usepackage{bm}
\usepackage{color}
\usepackage{tabularx}
\usepackage{stmaryrd}
\usepackage{pdfpages}
\usepackage{forest}
\usepackage{xcolor}

\usepackage[english]{babel}
\usepackage[margin=2.5cm]{geometry}
\usepackage{enumitem}

\numberwithin{equation}{section}
\usepackage[tickmarkheight=.5em,colorinlistoftodos,textsize=tiny]{todonotes}
\usepackage{xargs}

\newcommand{\PP}{\mathbb{P}}

\theoremstyle{plain}

\newtheorem{thm}{Theorem}[section]

\newtheorem{prop}[thm]{Proposition}
\newtheorem{lemma}[thm]{Lemma}
\newtheorem{question}[thm]{Question}
\theoremstyle{definition}
\newtheorem{df}[thm]{Definition}
\newtheorem{ex}[thm]{Example}
\theoremstyle{remark}
\newtheorem{rmk}[thm]{Remark}

\definecolor{rossred}{rgb}{1.0,0.25,0.66}  
\definecolor{rossgreen}{rgb}{0.25,0.66,0.25} 
\definecolor{rossblue}{rgb}{0.25,0.66,1.0}
\definecolor{sashapurple}{rgb}{0.5,0.15,0.5}

\usepackage[pdfborder={0 0 0}]{hyperref}
\hypersetup{
    colorlinks=true,
    linkcolor=rossblue,
    citecolor=rossgreen
}
\usepackage{url}
\usepackage{enumitem}
\usepackage[utf8]{inputenc}
\usepackage[T1]{fontenc}

\usepackage{tikz}
\usetikzlibrary{arrows,shapes,positioning}
\tikzstyle{every loop}= []

\colorlet{myGray}{gray!25}

\tikzset{my circle/.style={circle,draw=black,fill=myGray,inner
    sep=0pt,minimum size=6pt}} \tikzset{my square/.style={regular
    polygon,regular polygon sides=4,draw=black,fill=myGray,inner
    sep=0pt,minimum size=9pt}} \tikzset{my star/.style={star,star
    point ratio=2.5,draw=black,fill=myGray,inner sep=0pt,minimum
    size=9pt}} \tikzset{my triangle/.style={regular polygon,regular
    polygon sides=3,draw=black,fill=myGray,inner sep=0pt,minimum
    size=9pt}} \tikzset{my
  kite/.style={diamond,aspect=0.25,draw=black,fill=myGray,inner
    sep=1pt,minimum size=6pt}}
\tikzset{every pin/.style={pin distance=3pt,inner sep=1pt,font=\tiny}}

\numberwithin{equation}{section}
\title{Geometrically vertex decomposable star configurations}

\author[Cooper]{Susan M. Cooper}
\address[S. M. Cooper]
{Department of Mathematics, University of Manitoba, Winnipeg, MB R3T 2M6, Canada}
\email{susan.cooper@umanitoba.ca}

\author[Marangone]{Emanuela Marangone}
\address[E. Marangone]
{Department of Mathematics, University of Manitoba, Winnipeg, MB R3T 2M6, Canada}
\email{emanuela.marangone@umanitoba.ca}

\author[Guardo]{Elena Guardo}
\address[E. Guardo]
{Dipartimento di Matematica e Informatica, Università di Catania, 95125, Italy}
\email{guardo@dmi.unict.it}

\author[Van Tuyl]{Adam Van Tuyl}
\address[A. Van Tuyl]
{Department of Mathematics and Statistics
McMaster University, Hamilton, ON L8S 4L8, Canada}
\email{vantuyla@mcmaster.ca}

\keywords{geometrically vertex decomposable, 
star configuration, Knutson ideal}
\subjclass[2020]{14N20, 14J70, 13P10}

\date{\today}
\begin{document}

\begin{abstract}
The goal of this paper is to determine how the family
of  ideals of  star configurations intersects with the 
class of geometrically vertex decomposable ideals.   
The main result of this paper shows that
the answer is subtle since the geometrically vertex decomposability property of an ideal is {\it not} invariant under a linear change of 
variables, and thus the answer will depend upon the choice of the
linear forms that define the ideal of the star configuration.
We also show that the ideal of a star  configuration is a 
Knutson ideal precisely when it is a 
geometrically vertex decomposable  ideal.
\end{abstract}
\maketitle

\section{Introduction}

An ideal of a {\it star configuration} of codimension 
$1 \leq c \leq n$ in 
$\mathbb{P}^n$ is constructed from a collection of $\ell \geq c$ linear 
forms $\mathbb{L} = \{L_1,\ldots,L_\ell\}$ under some suitable hypotheses by 
taking the intersection of all ideals generated by
all subsets of $\mathbb{L}$ of size $c$ (see  
Definition \ref{defn:starconfig} for complete details).  We denote
the variety defined by this ideal by $\mathbb{X}(\ell,c)$
and its corresponding ideal as $I_{\mathbb{X}(\ell,c)}$.
For the special case $\mathbb{X}(5,2) \subseteq \mathbb{P}^2$, the variety is the ten points in the plane given by the intersection of five general lines, whose defining linear forms are  $\{L_1,\ldots,L_5\}$. In this 
situation, as shown in Figure \ref{starconfigpicture},
the five lines resemble
a star, thus providing the inspiration for the
name.
\begin{figure}[!ht]
  \centering
    \begin{tikzpicture}[scale=0.4]
      \draw (-5,3)--(5,3);
      \draw (1,8)--(-3,-4);
      \draw (-1,8)--(3,-4);
      \draw (5,4)--(-5,-4);
      \draw (-5,4)--(5,-4);
      \node at (-6.5,3) {$L_1$};
      \node at (-6,4.5) {$L_2$};
      \node at (-2,7) {$L_3$};
      \node at (2,7)  {$L_4$};
      \node at (6,4.5) {$L_5$};
      \node[my circle] at (0,5) {};
      \node[my circle] at (-.666,3) {};
      \node[my circle] at (.666,3) {};
      \node[my circle] at (3.75,3) {};
      \node[my circle] at (-3.75,3) {};
      \node[my circle] at (0,0) {};
      \node[my circle] at (-2.35,-1.82) {};
       \node[my circle] at (2.35,-1.82) {};
      \node[my circle] at (-1.32,1.05) {};
      \node[my circle] at (1.32,1.05) {};
    \end{tikzpicture}
    \caption{$\mathbb{X}(5,2)$, a star configuration of 10 points in $\mathbb{P}^2$}
    \label{starconfigpicture}
\end{figure}
While the name and many of the
foundational properties of these varieties can be  found in
\cite{GHM13}, they appear earlier in the 
literature in \cite{CHT11, Dbook, GMS2006,OS10}, where they are sometimes called ``$\ell$-laterals''.  
Star configurations of the form $\mathbb{X}(\ell,c) \subseteq \mathbb{P}^n$ have many nice geometric, algebraic,
and homological properties which can determined
directly from the values of $\ell, c,$ and $n$; see \cite{BH2010,CGVT14,  CVT2011, CDG2022, M2020}.

Klein and Rajchgot \cite{KR21} introduced the notion
of a {\it geometrically vertex decomposable ideal} (see
Definition \ref{dfGVD}), based upon the geometric vertex decomposition first described by
Knutson, Miller, and Yong \cite{KMY}.  Geometrically vertex decomposable
ideals can be viewed as a generalization of Stanley-Reisner
ideals of vertex decomposable simplicial complexes since they
share many of the same properties, including similar
liaison theoretic  properties \cite{KR21}. The last five years have seen
a number of papers add to our understanding  of 
geometrically vertex decomposable ideals,
including connections with toric ideals of graphs
\cite{BVTZ, CDSRVT}, matrix Schubert varieties
\cite{K23}, homological invariants like Castelnuovo-Mumford regularity
\cite{NRVT24}, Gorenstein liaison \cite{FKRS},
and Frobenius splittings \cite{DSH,DGKRS}.

Given the importance of both classes of ideals in commutative algebra and
algebraic geometry, it is natural to ask when the ideal of a
star configuration is also geometrically vertex decomposable. One might initially expect that the property of
being a geometrically vertex decomposable ideal would only depend
upon the parameters $\ell,c$, and $n$.  However, as the following 
example illustrates, the answer is more subtle since it depends upon on
$\mathbb{L}$, the linear forms that define
$\mathbb{X}(\ell,c)$.    This example will be used to illuminate
the statement of our main theorem.

\begin{ex}\label{ex.intro}
Let $R = k[x_0,x_1,x_2]$ denote the coordinate ring of $\mathbb{P}^2$.
    In $\mathbb{P}^2$, any three non-collinear points are an example
    of a star configuration $\mathbb{X}(3,2)$.  Consider
    the following two such star configurations:
    \begin{eqnarray*}
        \mathbb{X}_1 & = & \{[1:0:0], [0:1:0], [0:0:1]\}, ~\mbox{and}  \\
        \mathbb{X}_2 & = & \{[1:1:326],[1:848:1], [95125:1:1]\}\footnotemark. 
    \end{eqnarray*}
    \footnotetext{The set $\mathbb{X}_2$ was constructed from the postal codes of the authors:  R3T 2M6, L8S 4L8, and 95125.}
    The set $\mathbb{X}_1$ is the star configuration 
    $\mathbb{X}(3,2)$
    constructed from $\mathbb{L}_1 = \{x_0,x_1,x_2\}$, while the second
    set is the star configuration $\mathbb{X}(3,2)$ constructed from 
    \footnotesize
    \begin{multline*}
        \mathbb{L}_2 = \{ 276\,447\,x_0-325\,x_1-847\,x_2,~  
        325\,x_0-31\,010\,749\,x_1+95\,124\,x_2,~        
        847\,x_0+95\,124\,x_1-80\,665\,999\,x_2\}
    \end{multline*}
    \normalsize
    The defining ideal of $\mathbb{X}_1$ is $I_{\mathbb{X}_1} =
    (x_0x_1,x_0x_2,x_1x_2)$.  This is a squarefree monomial ideal whose
    associated simplicial complex via the Stanley-Reisner
    correspondence is a vertex decomposable simplicial complex.
    Consequently, by \cite[Proposition 2.9]{KR21}, the ideal
    $I_{\mathbb{X}_1}$ is geometrically vertex decomposable.

    On the other hand, the ideal $I_{\mathbb{X}_2}$ is not
    geometrically vertex decomposable.  This can be checked
    using the \emph{Macaulay2} package {\tt GeometricDecomposability} \cite{CVT24}.  An
    alternative strategy is to exploit a property
    of the Gr\"obner basis of geometrically vertex
    decomposable ideals, as first described in
    \cite[Lemma 2.6]{KR21} (also see Lemma \ref{lemma:squarefree}).

    We point out that
    the difference is {\it not} a result of $I_{\mathbb{X}_1}$ being
    a monomial ideal.  If $\mathbb{L}_3 = \{x_2,x_1+x_2,x_0+x_1+x_2\}$,
    then the resulting star configuration is
    $$\mathbb{X}_3 = \mathbb{X}(3,2) = \{[1:0:0],[1:-1:0],[0:1:-1]\}.$$
    In this case, the ideal of the star configuration is
    $$I_{\mathbb{X}_3} = \left(x_1\,x_2+x_2^{2},\,x_0\,x_2+x_1\,x_2+x_2^{2},\,x_0\,x_1+x_1^{2}+x_0\,x_2+2\,x_1\,x_2+x_2^{2}\right).$$
    This ideal is not a monomial ideal, but it is geometrically vertex decomposable as shown in Example \ref{exGVD}.  
\end{ex}

Since any three non-collinear points in $\mathbb{P}^2$ can be 
    mapped to any other three non-collinear points by a linear 
    change of variables, the above example highlights that the 
    property of being a geometrically vertex decomposable ideal is not
    a geometric invariant, but 
    an algebraic property that depends
    upon the choice of the forms in $\mathbb{L}$.  To capture the
    needed information, we associate to any collection of linear forms
     $\mathbb{L} = \{L_1,\ldots,L_\ell\}$ in
    $R = k[x_0,\ldots,x_n]$ an $\ell \times (n+1)$ matrix where
    the $(i,j)$-th entry is the coefficient of $x_{j+1}$ in the form
    $L_i$.  For the three examples given in Example \ref{ex.intro},
    the corresponding matrices are
    $$\begin{bmatrix}
    1 & 0 & 0 \\
    0 & 1 & 0 \\
    0 & 0 & 1
        \end{bmatrix},
        \begin{bmatrix}
    276\, 447 & -325 & -847 \\
    325 & -31\, 0310\, 749  & 95\, 124 \\
    847 & 95\, 124 & -80\, 665\, 999
        \end{bmatrix}, ~\mbox{and}~
\begin{bmatrix}
    0 & 0 & 1 \\
    0 & 1 & 1 \\
    1 & 1 & 1
        \end{bmatrix}.
 $$
Note that for the two ideals that are geometrically vertex decomposable,
the associated matrix constructed from $\mathbb{L}$ is triangular, but 
this is not the case for the other ideal. 

This
observation about the ``shape'' of 
the matrix turns out to be the key to classify the ideals of
star configurations that are geometrically vertex decomposable.
Our main result refines this 
observation.

\begin{thm}\label{mainthm} Let $\mathbb X(\ell,c)$ be a star 
configuration of codimension $1 \leq c \leq n$ in $\mathbb{P}^n$ 
defined by the general linear forms 
$\mathbb L = \{L_1, \ldots, L_{\ell}\}$ with $\ell\geq c$.  
\begin{enumerate}
    \item If $\ell = c$, then $I_{\mathbb{X}(\ell,c)}$ is always
    a geometrically vertex decomposable ideal.
    \item If $\ell =c+1$, then $I_{\mathbb{X}(\ell,c)}$ is a geometrically
    vertex decomposable ideal if and only if (up to reordering the variables and the linear forms in $\mathbb L$) the coefficients of the linear forms in $\mathbb L$ form a matrix that has a $(c+1)\times(c+1)$ triangular submatrix
$$
\left[ \begin{array}{ccccccc}
0 & 0 & \cdots & a_{1,c}&*& \cdots &* \\
\vdots & \vdots & \vdots & \vdots&\vdots& \vdots &\vdots  \\
0 & a_{c,1} &  \cdots & a_{c,c}  & *& \cdots &* \\
a_{c+1,0}& a_{c+1,1}& \cdots   & a_{c+1,c}&*& \cdots &* 
\end{array}
\right].$$
\item If $c+1<\ell \leq n+1$, then $I_{\mathbb{X}(\ell,c)}$ is a 
    geometrically vertex decomposable ideal if and only if 
    $I_{\mathbb{X}(\ell,\ell-1)}$ is a geometrically vertex
    decomposable ideal.   
    %the ideal of the star configuration of codimension $\ell-1$ defined by $\mathbb L$ is geometrically vertex decomposable.
   \item If $\ell > n+1$, then $I_{\mathbb{X}(\ell,c)}$ is never a
    geometrically vertex decomposable ideal. 
\end{enumerate}
\end{thm}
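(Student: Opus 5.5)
The plan is to treat the four parts separately, using the recursive definition of geometric vertex decomposability (Definition \ref{dfGVD}), the Gr\"obner basis criterion of \cite[Lemma 2.6]{KR21} (Lemma \ref{lemma:squarefree}), and the standard facts on star configurations from \cite{GHM13}. For $\ell\le n+1$ the forms $L_1,\dots,L_\ell$ are linearly independent. So $I_{\mathbb X(\ell,c)}$ is, after a linear change of coordinates, the squarefree monomial ideal generated by all products of $\ell-c+1$ of the $L_i$.

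\textbf{Part (1).} If $\ell=c$, then $I=(L_1,\dots,L_c)$ is generated by independent linear forms. I would row reduce the coefficient matrix and reorder the variables so that the $L_i$ have distinct leading variables $y_1,\dots,y_c$. Then I vertex decompose with respect to $y_1$: the generator with leading term $y_1$ gives $C_{y_1,I}=N_{y_1,I}=(L_2,\dots,L_c)$ after reduction. This is a degenerate decomposition, so instead I would use that $\mathrm{in}_{y_1}(I)=(y_1)+(\text{rest})$ and apply the linear-form base case of Definition \ref{dfGVD}. Induction on $c$ finishes the argument; alternatively, I would quote that ideals generated by linear forms are geometrically vertex decomposable.

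\textbf{Part (4).} If $\ell>n+1$, I would invoke Lemma \ref{lemma:squarefree}. For any variable $y$ and any $y$-compatible order, a geometrically vertex decomposable ideal has a Gr\"obner basis in which $y$ appears with degree at most $1$, and this property propagates to $C_{y,I}$ and $N_{y,I}$. I would reduce to a contradiction as follows. Any geometric vertex decomposition must eventually reach a star configuration or a linear-type ideal in fewer variables. Because $\ell>n+1$, more than $n+1$ general forms involve every variable, and the generators $L_{i_1}\cdots L_{i_{\ell-c+1}}$ force a Gr\"obner basis element in which some variable has degree at least $2$. The reason is that the initial ideal of $I$ equals the lex-segment-like ideal from \cite{GHM13}, which contains a pure power $y^{\ell-c+1}$-type term divided by nothing squarefree. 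I expect this to need a careful computation of $\mathrm{in}_y(I)$, most likely by induction on $n$ via hyperplane sections, since $N_{y,I}$ and $C_{y,I}$ look like ideals of star configurations in $\mathbb P^{n-1}$ again with too many forms.

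\textbf{Parts (2) and (3).} These are the core of the proof. For (2), with $\ell=c+1$, the ideal is generated by the products $L_1\cdots\widehat{L_i}\cdots L_{c+1}$. For the \emph{if} direction, I take the triangular submatrix and decompose with respect to $x_0$, which appears only in $L_{c+1}$. Then $C_{x_0,I}=(L_1\cdots L_c)$ after deleting the $x_0$ factor, with $L_{c+1}$ replaced by its $x_0$-coefficient. Meanwhile $N_{x_0,I}$ is the ideal of $\mathbb X(c,c-1)$ defined by $L_1,\dots,L_c$ modulo $x_0$, whose matrix is again triangular. Induction applies, with principal ideals and linear ideals as base cases. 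For \emph{only if}, I argue the contrapositive: the first decomposition variable $y$ must appear in at most one $L_i$ degree-wise, by Lemma \ref{lemma:squarefree} applied to the generator not containing that $L_i$. Hence some variable occurs in exactly one form. Recursing on $N_{y,I}$ builds the triangular shape column by column. For (3), I would show that a geometric vertex decomposition of $I_{\mathbb X(\ell,c)}$ with respect to a variable $y$ produces $C$ and $N$ which are star configuration ideals (of codimensions $c$ and $c-1$, or with one fewer form) for the same reduced form set. This lets me induct on $\ell-c$ down to $c=\ell-1$, and conversely lift decompositions upward.

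I expect the main obstacle to be the \emph{only if} directions in (2) and (4). These require controlling every possible choice of decomposition variable and term order, and showing that non-triangularity (respectively $\ell>n+1$) forces a non-squarefree $y$-degree in some reduced Gr\"obner basis.
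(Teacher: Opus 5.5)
\textbf{Part (4) is not proved.} The lex-segment-type initial ideal you invoke is not available; initial ideals of $I_{\mathbb X(\ell,c)}$ depend on $\mathbb L$ and on the order. Exhibiting a Gr\"obner basis element that is non-squarefree in \emph{some} variable is also not enough: you must rule out a geometric vertex decomposition with respect to \emph{every} variable. Your planned recursion into $C_{y,I}$ and $N_{y,I}$ is misplaced as well, because for $\ell>n+1$ no decomposition exists even at the first step.

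The missing observation is a pigeonhole argument on the independence hypothesis. Any $n+1$ of the $L_i$ are linearly independent, so at most $n$ of them lie in the span of the variables other than $x_i$. Hence each $x_i$ occurs in at least $\ell-n\ge 2$ forms, say $L_p$ and $L_q$. Since $c\le n<\ell$, the generators have degree $\ell-c+1\ge 2$, so some minimal generator $L_pL_q\cdots$ (Proposition \ref{hvec}) is not squarefree in $x_i$. Lemmas \ref{mindeg} and \ref{lemma:squarefree} then exclude every variable; this is Lemma \ref{var2}. The paper argues differently, via Hilbert functions. A GVD ideal is almost Hilbertian (Theorem \ref{hilbertian}), which forces the $h$-vector to have at most $n-c+2$ nonzero entries, while that of $\mathbb X(\ell,c)$ has $\ell-c+1$.

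\textbf{Parts (2) and (3).} Your architecture is the paper's: decompose with respect to a variable occurring in a single form, then induct. However, the computations are wrong and the key tool is missing.

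The minimal generators of $I_{\mathbb X(\ell,c)}$ are products of $\ell-c+1$ forms. So for $\ell=c+1$ they are the quadrics $L_iL_j$, not the products $L_1\cdots\widehat{L_i}\cdots L_{c+1}$ of $c$ forms, which generate $I_{\mathbb X(c+1,2)}$. Your $C_{x_0,I}=(L_1\cdots L_c)$ is also wrong. Since ${\rm in}_{x_0}(L_1L_{c+1})=a_{c+1,0}x_0L_1$, we get $L_1\in C_{x_0,I}$. In fact $C_{x_0,I}=(L_1,\ldots,L_c)$, a linear complete intersection handled by Part (1), while $N_{x_0,I}=I_{\mathbb X(c,c-1)}$ carries the induction.

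More importantly, you give no way to compute ${\rm in}_y(I)$. The paper's engine is Lemma \ref{induction}: $I_{\mathbb X(\ell,c)}=(I_{\mathbb X(\ell-1,c)}\cap(L_\ell))+I_{\mathbb X(\ell-1,c-1)}$. When $y$ occurs only in $L_\ell$, this yields ${\rm in}_y(I)=y\,I_{\mathbb X(\ell-1,c)}+I_{\mathbb X(\ell-1,c-1)}$. Hence $C_{y,I}=I_{\mathbb X(\ell-1,c)}$ and $N_{y,I}=I_{\mathbb X(\ell-1,c-1)}$ for $\{L_1,\dots,L_{\ell-1}\}$. Your claim in (3) that $C$ and $N$ are again star configurations is exactly this statement, but you assert it rather than prove it.

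Your ``only if'' idea, that the decomposition variable lies in exactly one form, is right. The witness, though, is a minimal generator containing \emph{both} forms in which $y$ occurs, not one omitting a form. You also need Lemma \ref{mindeg}, not just Lemma \ref{lemma:squarefree}, to pass from one non-squarefree minimal generator to the nonexistence of any generating set squarefree in $y$.

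\textbf{Minor points.} In (1), $C_{y_1,I}=(1)$ rather than $(L_2,\dots,L_c)$, and Definition \ref{dfGVD} permits this. The base case of that definition covers only ideals generated by variables, not arbitrary linear forms. Finally, your opening change of coordinates to a monomial ideal is of no use here, since geometric vertex decomposability is not invariant under linear changes of variables.
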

\noindent
To prove  Theorem \ref{mainthm} (2) and (3), we rely on 
a result of Klein-Rajchgot \cite{KR21} that a 
geometrically vertex decomposable
ideal must have a generating set that is squarefree in a variable.  
Theorem \ref{mainthm} (4) requires the $h$-vector
of star configurations, as  first described by 
Geramita-Harbourne-Migliore \cite{GHM13}, and
a property of the 
Hilbert functions of geometrically vertex decomposable
ideals due to  Nguy$\tilde{\text{\^e}}$n-Rajchgot-Van Tuyl
\cite{NRVT24}. 

In addition, we also exhibit a link 
between geometrically vertex decomposable ideals and 
{\it Knutson ideals} (see Definition \ref{kn}).
These ideals first arose in  Knutson's work
related to Frobenius splittings \cite{K2009}.
The name for these ideals comes from Conca-Varbarro \cite{CV2020}, where
some of their properties were described.  Seccia later
described some families of Knutson ideals \cite{S2021,S2022}.
As recently shown by De Negri, Gorla, Klein, Rajchgot, 
and Seccia \cite{DGKRS}, there appears to be a relationship
between Frobenius splittings, Knutson ideals, and 
geometrically vertex decomposable ideals.  
While the set of Knutson ideals and geometrically
vertex decomposable ideals are not the same, in the case
of ideals of star configurations, we are able to make
the following contribution.

\begin{thm}[Theorem \ref{mainthm2}]
Let $\mathbb{X}(\ell,c)$ be a star configuration of 
codimension $c$ in $\mathbb{P}^n$.
    Then $I_{\mathbb{X}(\ell,c)}$ 
    is a geometrically vertex decomposable ideal if and only if 
    $I_{\mathbb{X}(\ell,c)}$ is a Knutson ideal.
\end{thm}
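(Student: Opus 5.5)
The plan is to let Theorem \ref{mainthm} carry most of the argument and prove the equivalence case by case in $\ell$ versus $c$ and $n$. Recall that a Knutson ideal is built from a polynomial $f$ whose leading term (for some term order) is squarefree: it is any ideal obtainable from $(f)$ by repeatedly taking colons, sums and intersections, and primary components. First record the general facts. Knutson ideals are radical, and all of them share a squarefree initial ideal $\mathrm{in}_<(f)$ structure; more precisely, for each Knutson ideal $I$, the ideal $\mathrm{in}_<(I)$ is squarefree. Moreover, for Knutson ideals the intersections commute with taking initial ideals (Knutson, Conca--Varbaro).

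\textbf{Forward direction (GVD $\Rightarrow$ Knutson).} I follow the cases of Theorem \ref{mainthm}. In case (1), $I_{\mathbb{X}(c,c)}=(L_1,\dots,L_c)$. Take $f=L_1\cdots L_c\cdot g$, with $g$ a product of further linear forms if needed, chosen so that $\mathrm{in}(f)$ is squarefree. Since the $L_i$ are general, after a change of order they are triangular with respect to distinct leading variables. Each $(L_i)$ is then a component of $(f)$, and the sum $(L_1)+\cdots+(L_c)$ is Knutson.

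In cases (2) and (3), the GVD hypothesis gives the triangular shape: after reordering, $L_i$ involves the variable $x_{c+1-i}$ and none of $x_0,\dots,x_{c-i}$ among the first $c+1$ variables. Choose a lex order in which $x_{c+1-i}$ is the leading variable of $L_i$, and set $f=L_1\cdots L_{\ell}$. Its leading term is a product of distinct variables, so it is squarefree. Each principal ideal $(L_i)$ is a minimal prime of $(f)$, hence Knutson. Sums $(L_{i_1},\dots,L_{i_c})$ are Knutson, and so is the intersection over all $c$-subsets, which is $I_{\mathbb{X}(\ell,c)}$ by Definition \ref{defn:starconfig}. For (3) I need the triangular shape for all $\ell\le n+1$ forms; this is exactly what the GVD criterion for $I_{\mathbb{X}(\ell,\ell-1)}$ produced in Theorem \ref{mainthm}(2)/(3).

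\textbf{Reverse direction (Knutson $\Rightarrow$ GVD).} Suppose $I=I_{\mathbb{X}(\ell,c)}$ is Knutson for some $f$ and term order. Then $\mathrm{in}_<(I)$ is squarefree, and the initial ideals of the associated primes $(L_{i_1},\dots,L_{i_c})$ are squarefree as well. Every ideal among the $(L_i)$ and their sums also arises in the Knutson family, by taking components and sums. So the initial terms of the $L_i$ are single variables, and by Knutson's theory the initial ideal of the intersection equals the intersection of the initial ideals. I expect the main obstacle to lie here: I must show this forces the triangular shape, or rule it out when $\ell>n+1$. For $\ell>n+1$, compatibility requires the $\ell$ linear forms to have initial variables forming a squarefree monomial $\mathrm{in}(f)$ divisible by distinct variables, and there are only $n+1$ of them. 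Distinctness should follow because, if two of the forms shared a leading variable, the sum of their principal ideals would have a non-radical, or wrong-codimension, initial ideal, contradicting the Knutson property. For $\ell\le n+1$, distinct leading variables under a lex order, after Gaussian-style reordering, translate into the triangular submatrix of Theorem \ref{mainthm}(2). Theorem \ref{mainthm} then gives GVD. Case (1) is trivially consistent, since both properties hold.
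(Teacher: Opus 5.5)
Your forward direction follows the paper's argument: choose a lex order making the leading variables distinct, set $f=L_1\cdots L_\ell$, obtain $(L_i)=(f):(f/L_i)$, and then take sums and intersections. There is one slip in case (1). Linearly independent forms need not have distinct leading variables under any term order (e.g.\ $x_0+x_1,\ x_0+2x_1$), and then $\mathrm{in}(L_1\cdots L_c\,g)$ is never squarefree. You must first row-reduce to forms $L_1',\dots,L_c'$ with distinct pivots that generate the same ideal, as the paper does following Lemma \ref{CI}.

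The genuine gap is in the reverse direction, at the step you flag yourself. None of your reasons for the $L_i$ having pairwise distinct leading variables holds.
\begin{enumerate}
\item Nothing places $(L_i)$ in $C_f$. Every member of $C_f$ contains $f$, so this would force $L_i\mid f$, and nothing guarantees that. Colons of $I$ only produce intersections of its associated primes $(L_{j_1},\dots,L_{j_c})$, and sums only enlarge ideals, so for $c\ge 2$ you never descend to a principal $(L_i)$.
\item $f$ is only known to lie in $I$, so there is no reason for $\mathrm{in}(f)$ to be a product of the leading variables of the $L_i$.
\item Suppose $\mathrm{in}(L_i)=\mathrm{in}(L_j)=x$. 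Then $(L_i,L_j)=(L_i,L_j-\lambda L_i)$ has initial ideal generated by two distinct variables, which is radical and of the correct codimension. The only failure is $\mathrm{in}(L_i)+\mathrm{in}(L_j)=(x)\ne\mathrm{in}(L_i,L_j)$. That contradicts the compatibility of $\mathrm{in}$ with sums only if $(L_i),(L_j)\in C_f$, which is the unproved point (1).
\end{enumerate}

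The missing idea is the paper's degree argument, which needs no information about $f$. Fix any term order and suppose two forms share a leading variable $x$. For $\ell>n+1$ this follows by pigeonhole. For $c<\ell\le n+1$ with no triangular submatrix it is forced, since pairwise distinct leading variables would themselves exhibit one. Then $G=L_iL_jL_{i_3}\cdots L_{i_{\ell-c+1}}$ is a generator of $I$ of degree $\ell-c+1\ge 2$ with $x^2\mid \mathrm{in}(G)$. $I$ has no nonzero elements of degree less than $\ell-c+1$, and hence neither does $\mathrm{in}(I)$. So $\mathrm{in}(G)$ is a minimal generator of $\mathrm{in}(I)$, and $\mathrm{in}(I)$ is not squarefree. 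This holds for every order, so Theorem \ref{initial ideal} rules out $I\in C_f$ for every admissible $f$. This is exactly Theorem \ref{kn l>n+1} together with the converse step of the paper's proof, and it is what your reverse direction needs in place of the current heuristics.
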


Our paper is structured as follows.  In Section 
\ref{sec.backgroud} we provide the needed background
on geometrically vertex decomposable ideals.
In Section \ref{sec.starconfig} we introduce star
configurations, and prove Theorem \ref{mainthm} (4).
The rest of Theorem \ref{mainthm} is proved in
Section \ref{sec.classification}.  In Section \ref{sec.knutson} we show that the
ideal of a star configuration is Knutson if and only
if it is geometrically vertex decomposable.
Finally, Section \ref{sec.future} considers future directions
and takes the first step towards extending our results
to generalized star configurations.

%%%%%%%%%%%%%%%%%%%%%%%%%%%%%%%%%%%%%%%%%%%%%%%%%%%%%%%%%%%

\section{Background}\label{sec.backgroud}

We let $k$ be an infinite field and $R$ be the polynomial ring $R = k[x_0, \ldots, x_n]$. For some $0 \leq j \leq n$, fix $y = x_j$ and set $S = k[x_0, \ldots, \widehat{x_j}, \ldots, x_n]$.  Observe that if $f \in R$, we can write $f = \sum_{i=0}^d \alpha_i y^i$, where $\alpha_i \in S$.  If $0 \not = f \in R$ and $<$ is a monomial order on $R$, then we call $\alpha_d y^d$ the \emph{initial $y$-form} of $f$.  Extending to ideals $J \subset R$, we 
let ${\rm in}_y(J) = ({\rm in}_y (f) : f \in J)$.  The monomial order $<$ is called \emph{$y$-compatible} if ${\rm in}_<({\rm in}_y(f)) = {\rm in}_<(f)$ for all $f \in R$.
\begin{rmk}
    Given a $y$-compatible order $<$ for an ideal $I \subset R = k[x_0, \ldots, x_n]$, if $\mathcal G = \{g_1, \ldots, g_m\}$ is a Gr\"obner basis of $I$, then we write $g_i = y^{d_i}q_i + r_i$, with ${\rm in}_y(g_i) = y^{d_i}q_i$ and where $y$ does not divide any term of $q_i$.  With this notation, we define the ideals
$$C_{y,I} = (q_1, \ldots, q_m) \,\,\,\,\, \text{and} \,\,\,\,\, N_{y,I} = (q_i \,:\, d_i = 0).$$
\end{rmk}
Since $x_j$ does not appear in the generators of $C_{y,I}$ and 
$N_{y,I}$, with abuse of notation we let $C_{y,I}$ and $N_{y,I}$
denote both an ideal in $R=k[x_0,, \ldots, x_n]$ and in 
$S = k[x_0, \ldots, \widehat{x_j}, \ldots, x_n]$.

\begin{df}
Let $I \subset R = k[x_0, \ldots, x_n]$ be an ideal and $<$ be a $y$-compatible monomial order on $R$.  When
$${\rm in}_y(I) = C_{y,I} \cap (N_{y, I} + (y)),$$
we call this decomposition a \emph{geometric vertex decomposition} of $I$ with respect to $y$.
\end{df}

From Klein and Rajchgot \cite{KR21} we have that:
\begin{lemma}\label{lemma: indep.}
Let $I \subset R = k[x_0, \ldots, x_n]$ and let $y=x_j$. The ideals $C_{y,I}$ and $N_{y,I}$, as in the above definition, are independent of the choice of  Gr\"obner basis and of the $y$-compatible monomial order $<$. They are uniquely determined as ideals in $S=k[x_0, \ldots, \widehat{x_j}, \ldots, x_n]$ by
$$C_{y,I}=({\rm in}_y(I) :y^{\infty}) \quad \text{and} \quad N_{y,I}+(y)= {\rm in}_y(I) +(y). $$
\end{lemma}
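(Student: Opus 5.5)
We must prove Lemma~\ref{lemma: indep.}: $C_{y,I}$ and $N_{y,I}$ do not depend on the Gr\"obner basis or on the $y$-compatible order, and they satisfy $C_{y,I} = ({\rm in}_y(I):y^\infty)$ and $N_{y,I}+(y) = {\rm in}_y(I)+(y)$. Since the right-hand sides are defined purely from $I$ and $y$, independence follows once these two equalities are established. The key tool is the following standard fact for a $y$-compatible order $<$: if $\mathcal G=\{g_1,\dots,g_m\}$ is a Gr\"obner basis of $I$ with respect to $<$, then $\{{\rm in}_y(g_1),\dots,{\rm in}_y(g_m)\}$ is a Gr\"obner basis of ${\rm in}_y(I)$ with respect to $<$, and in particular generates it. To prove this, we note that ${\rm in}_y(I)$ is spanned by the ${\rm in}_y(f)$ with $f\in I$. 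By $y$-compatibility, ${\rm in}_<({\rm in}_y(f))={\rm in}_<(f)$. Hence ${\rm in}_<({\rm in}_y(I))\supseteq {\rm in}_<(I)=({\rm in}_<(g_i))=({\rm in}_<({\rm in}_y g_i))$. For the reverse inclusion, we first observe that ${\rm in}_y(I)$ is generated by elements that are homogeneous in the $y$-grading. Each $y$-homogeneous element of ${\rm in}_y(I)$ is a sum of $y$-homogeneous multiples of initial $y$-forms of elements of $I$, and such multiples are again initial $y$-forms of elements of $I$. So it remains to check the claim on sums of initial $y$-forms with the same $y$-degree. We do this by comparing with the corresponding sum in $I$ and its leading term, cancelling as needed. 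This is the main technical step; I would take it directly from Knutson--Miller--Yong \cite{KMY} rather than re-derive it. Once ${\rm in}_<({\rm in}_y(I))={\rm in}_<(I)$ is known, the generating claim follows in the usual way.

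Next we compute the saturation. Write ${\rm in}_y(g_i)=y^{d_i}q_i$ with $q_i\in S$. Then ${\rm in}_y(I)=(y^{d_1}q_1,\dots,y^{d_m}q_m)$ is generated by elements of this product form. For an ideal $J=(y^{d_i}q_i)$ with $q_i\in S$, we claim $(J:y^\infty)=(q_1,\dots,q_m)$. The inclusion $\supseteq$ is immediate. For $\subseteq$, suppose $y^N h\in J$. Grade $R=S[y]$ by $y$-degree and compare coefficients of each power of $y$. Every $y$-homogeneous component of $J$ in degree $e$ has the form $y^e\cdot(\text{an element of } (q_i : d_i\le e)S)$. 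Hence each coefficient of $h$ lies in $(q_1,\dots,q_m)$, so $h\in (q_1,\dots,q_m)$. Taking $J={\rm in}_y(I)$ gives $C_{y,I}=({\rm in}_y(I):y^\infty)$.

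Finally, ${\rm in}_y(I)+(y)=(y^{d_i}q_i)+(y)$. The generators with $d_i\ge1$ already lie in $(y)$, so the sum equals $(q_i : d_i=0)+(y)=N_{y,I}+(y)$. We also note that $N_{y,I}$ itself is uniquely determined as an ideal of $S$: it is the image of ${\rm in}_y(I)+(y)$ under the isomorphism $R/(y)\cong S$, that is, the contraction $({\rm in}_y(I)+(y))\cap S$. Since $C_{y,I}$ is generated by elements of $S$, it likewise equals $({\rm in}_y(I):y^\infty)\cap S$ when viewed in $S$. Together these give the claimed independence from both the Gr\"obner basis and the $y$-compatible order.
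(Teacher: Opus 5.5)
Your argument is correct. Once the initial $y$-forms $y^{d_i}q_i$ of a Gr\"obner basis with respect to a $y$-compatible order are known to generate ${\rm in}_y(I)$, your $y$-graded computation of $({\rm in}_y(I):y^\infty)$ and of ${\rm in}_y(I)+(y)$ gives both identities, and hence the independence.

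The paper gives no proof of its own here; it quotes the lemma from Klein--Rajchgot \cite{KR21}, whose justification takes the same route.

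The step you defer to Knutson--Miller--Yong needs no cancellation argument. A nonzero $y$-homogeneous element of ${\rm in}_y(I)$ of $y$-degree $e$ is already ${\rm in}_y(F)$ for $F$ a sum of elements of $I$ of $y$-degree $e$. So its leading term lies in ${\rm in}_<(I)$ directly.
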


\begin{df} \label{dfGVD}
An ideal $I \subset R = k[x_0, \ldots, x_n]$ is said to be \emph{geometrically vertex decomposable (GVD)} if $I$ is unmixed and
\begin{enumerate}
  \item[(a)] $I = (1)$, or $I$ is generated by a (possibly empty) subset of $\{x_0, \ldots, x_n\}$, or
  \item[(b)] there exists $y = x_j$ such that we have a geometric vertex decomposition ${\rm in}_y(I) = C_{y,I} \cap (N_{y, I} + (y))$, and the contractions of $C_{y,I}$ and $N_{y,I}$ to $S=k[x_0, \ldots, \widehat{x_j}, \ldots, x_n]$ are geometrically vertex
  decomposable.
  \end{enumerate}
\end{df}

Recall that we say that a polynomial $f \in R$ is {\it squarefree in 
a variable $x_j$} if $x_j^2$ does not divide any term in the 
support of $f$.  An ideal $I$ is {\it squarefree in $x_j$} if there
is a generating set of $I$ such that every element in the
generating set is squarefree in $x_j$.

\begin{lemma}[{\cite[Lemma 2.6]{KR21}}]\label{lemma:squarefree}
Let $I \subset R = k[x_0, \ldots, x_n]$ be an ideal,
and let $<$ be a $y$-compatible monomial order on $R$.  If $I$ has a geometric vertex decomposition with respect to $y = x_j$, then $I$ is squarefree in $y$, and the reduced Gr\"obner basis of $I$ with respect to any $y$-compatible term order has the form
  $$\{yq_1+r_1, \ldots, yq_{\ell}+r_{\ell}, h_1, \ldots, h_t\}$$
  where $y$ does not divide any term of any $q_i$ or $r_i$ for any $1 \leq i \leq \ell$ nor any $h_j$ for $1 \leq j \leq t$.
\end{lemma}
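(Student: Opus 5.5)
The plan is to read off the $y$-degrees of the reduced Gr\"obner basis from the shape of ${\rm in}_y(I)$ that a geometric vertex decomposition forces. Fix a $y$-compatible order $<$ and let $\mathcal G=\{g_1,\ldots,g_m\}$ be the reduced Gr\"obner basis of $I$. Write $g_i = y^{d_i}q_i + r_i$ with ${\rm in}_y(g_i)=y^{d_i}q_i$. Then $q_i \in S$, and every term of $r_i$ has $y$-degree $<d_i$.

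I would use the standard fact, from Knutson--Miller--Yong and as in Klein--Rajchgot, that for a $y$-compatible order ${\rm in}_<({\rm in}_y(I)) = {\rm in}_<(I)$. Consequently $\{{\rm in}_y(g_i)\}$ is a Gr\"obner basis of ${\rm in}_y(I)$. This is the step I would check most carefully, since everything else reduces to it. To prove it, use ${\rm in}_<({\rm in}_y f)={\rm in}_< f$ for each $f$ to get ${\rm in}_<(I)\subseteq{\rm in}_<({\rm in}_y I)$. For the reverse inclusion, show that every element of ${\rm in}_y(I)$ has its leading term in ${\rm in}_<(I)$, working with $y$-homogeneous components.

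Next I would simplify the decomposition ${\rm in}_y(I)=C_{y,I}\cap(N_{y,I}+(y))$ to ${\rm in}_y(I)=N_{y,I}+yC_{y,I}$. First, $N_{y,I}\subseteq C_{y,I}$, because the generators of $N_{y,I}$ are among those of $C_{y,I}$. Now take $f$ in the intersection and write $f=n+yh$ with $n\in N_{y,I}$. Then $yh = f-n\in C_{y,I}$. Since $C_{y,I}$ is extended from $S$, the variable $y$ is a nonzerodivisor modulo $C_{y,I}$, so $h\in C_{y,I}$. The reverse inclusion is immediate. Hence ${\rm in}_y(I)$ is generated by elements of $y$-degree at most $1$, namely the $q_i$ with $d_i=0$ and the $yq_j$. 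It follows that ${\rm in}_<(I)={\rm in}_<({\rm in}_y I)$ is generated by monomials of $y$-degree at most $1$, namely ${\rm in}_<(q_i)$ and $y\,{\rm in}_<(q_j)$.

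Since $\mathcal G$ is reduced, the leading terms ${\rm in}_<(g_i)$ are exactly the minimal monomial generators of ${\rm in}_<(I)$, so each has $y$-degree at most $1$. By $y$-compatibility, ${\rm in}_<(g_i)={\rm in}_<(y^{d_i}q_i)$, which has $y$-degree exactly $d_i$, so $d_i\le 1$.

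It remains to read off the shape of the basis. When $d_i=1$, $g_i=yq_i+r_i$: here $q_i\in S$ is $y$-free, and the terms of $r_i$ have $y$-degree $<1$, so $r_i$ is $y$-free as well. When $d_i=0$, $g_i=h_j$ is entirely $y$-free. This gives the stated form $\{yq_1+r_1,\ldots,yq_\ell+r_\ell,h_1,\ldots,h_t\}$. Every element of this basis is squarefree in $y$, and the basis generates $I$, so $I$ is squarefree in $y$.
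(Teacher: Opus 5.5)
The paper gives no proof of this lemma (it is quoted from Klein--Rajchgot), and your argument is correct and is essentially the standard one: ${\rm in}_<({\rm in}_y I)={\rm in}_<(I)$ for $y$-compatible orders, together with minimality of the leading terms of the reduced Gr\"obner basis, forces every $d_i\le 1$. One step is only loosely justified: you deduce generators of ${\rm in}_<(I)$ from generators of ${\rm in}_y(I)$, which needs the $y$-grading or your Gr\"obner basis $\{{\rm in}_y(g_i)\}$. You can bypass it entirely, since the decomposition gives $yC_{y,I}\subseteq {\rm in}_y(I)$, so $y\,{\rm in}_<(q_i)\in{\rm in}_<(I)$ would properly divide ${\rm in}_<(g_i)$ whenever $d_i\ge 2$, contradicting reducedness.
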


By Lemma~\ref{lemma:squarefree}, a necessary condition for an ideal to be geometrically vertex decomposable is the existence of a variable $y$ such that the ideal admits a Gröbner basis that is squarefree in $y$. The following lemma will be used to show that an ideal is not 
geometrically vertex decomposable:

\begin{lemma}\label{mindeg}
Let $I$ be a homogeneous ideal of $R$, and let $d$ be the minimal degree of a nonzero element of $I$.
If there is an element of $I$ of degree $d$ that is not squarefree with respect to the variable $x_i$, then there does not exist a Gröbner basis 
for $I$ that is squarefree in $x_i$.
\end{lemma}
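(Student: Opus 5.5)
We argue by contradiction. Suppose $\mathcal G$ is a Gr\"obner basis of $I$, with respect to some monomial order $<$, in which every element is squarefree in $x_i$. Let $f \in I$ be a homogeneous element of degree $d$ that is not squarefree in $x_i$. Since $I$ is homogeneous, we may take $\mathcal G$ homogeneous: homogeneous components of elements that are squarefree in $x_i$ remain squarefree in $x_i$, and replacing a Gr\"obner basis by the homogeneous components of its elements still yields a Gr\"obner basis of a homogeneous ideal. (Alternatively, we can work directly with reduced Gr\"obner bases, which are automatically homogeneous.)

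The first step is to note that $I_d$ is the lowest nonzero graded piece of $I$. Hence every element of $\mathcal G$ of degree $d$ lies in $I_d$, and no element of $\mathcal G$ has degree less than $d$. The degree-$d$ part of ${\rm in}_<(I)$ is therefore spanned by the leading monomials of the degree-$d$ elements of $\mathcal G$. Reducing $f$ by $\mathcal G$ then uses only degree-$d$ elements $g \in \mathcal G$, each multiplied by a scalar: any multiplier of positive degree would have to come from an element of $\mathcal G$ of degree less than $d$, and there are none. Since $f \in I$, its remainder is $0$, so $f = \sum_j c_j g_j$ with $c_j \in k$ and each $g_j$ a degree-$d$ element of $\mathcal G$.

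Each $g_j$ is squarefree in $x_i$, so no term of $g_j$ is divisible by $x_i^2$. Any $k$-linear combination of the $g_j$ is supported inside the union of their supports, so it is also squarefree in $x_i$. This means $f$ is squarefree in $x_i$, which contradicts the choice of $f$.

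The only delicate point is the claim that the division of $f$ by $\mathcal G$ uses scalar multiples only. This comes down to minimality of $d$: an element of $\mathcal G$ dividing a degree-$d$ leading term must have degree at most $d$, and since $I$ has no nonzero elements of smaller degree, that degree is exactly $d$. If $\mathcal G$ is not assumed homogeneous, we first pass to homogeneous components as described above. The lemma as stated only asks us to rule out a Gr\"obner basis squarefree in $x_i$, and Lemma~\ref{lemma:squarefree} then converts this into non-GVD.
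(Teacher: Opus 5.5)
Your proof is correct and follows essentially the paper's argument. Both pass to a homogeneous Gr\"obner basis, use the minimality of $d$ to write $f$ as a $k$-linear combination of the degree-$d$ basis elements, and conclude that some basis element is not squarefree in $x_i$. The only differences are minor: you obtain the scalar coefficients from the division algorithm and justify the passage to homogeneous components explicitly, whereas the paper uses only that the basis generates $I$ and compares degrees.
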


\begin{proof}
    Fix any monomial order $<$, and let $\{b_1,\dots, b_m\}$ be a 
    Gröbner basis for the ideal $I$ with respect to this order.  
    Because $I$ is a homogeneous ideal, 
    we can assume that all the $b_i$'s are 
    homogeneous.
    Since the minimal degree of an element of $I$ is $d$, 
    we must have $\deg(b_j) \geq d$ for $1 \leq i \leq m$. 
    
    Let $f\in I$ be an element of degree $d$ that is not squarefree
    with respect to the variable $x_i$. Since $\{b_1,\dots, b_m\}$ generates $I$, we can write $f=\sum a_jb_j$. By the minimality of the degree $d$, the $a_i$'s must be constants or zero. Because $f$ is not squarefree in $x_i$, $x_i^2$ must appear in at least one of the $b_j$. We conclude that $\{b_1,\dots, b_m\}$ is not squarefree in $x_i$.
\end{proof}

The following lemma, which is of independent
interest, provides a large class
of ideals that are geometrically vertex decomposable
and will be used to prove Theorem \ref{mainthm} (1).

\begin{lemma}\label{CI}
     Let $I\subseteq R = k[x_0,\dots,x_n]$ be a complete intersection generated by linear forms. Then $I$ is geometrically vertex
     decomposable.
\end{lemma}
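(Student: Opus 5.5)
Since $I$ is a complete intersection generated by linear forms, I may assume it is minimally generated by $c$ linearly independent linear forms $L_1,\dots,L_c$, where $c$ is the codimension. The plan is to induct on $c$, peeling off one variable at a time. The base case $c=0$ is $I=(0)$, which is GVD by Definition \ref{dfGVD}(a). Unmixedness holds at every stage, because $I$ is prime; it is the ideal of a linear subspace.

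For the inductive step, put the coefficient matrix of $L_1,\dots,L_c$ in reduced row echelon form by Gaussian elimination. This changes the generators but not the ideal, and it yields generators of the form $L_i = x_{j_i} + (\text{linear form in the non-pivot variables})$ with distinct pivot variables $x_{j_1},\dots,x_{j_c}$. Take $y = x_{j_1}$ and a lex order in which the pivot variables are the largest. This order is $y$-compatible: lex with $y$ first has that property. The leading terms $x_{j_1},\dots,x_{j_c}$ are pairwise coprime, so these generators form a Gröbner basis. Writing $L_1 = y\cdot 1 + r_1$ and $L_i = y^0 q_i$ for $i\ge 2$ (since $y$ appears only in $L_1$), the definitions give
\[
C_{y,I} = (1) \quad\text{and}\quad N_{y,I} = (L_2,\dots,L_c).
\]
Then
\[
{\rm in}_y(I) = (y, L_2,\dots,L_c) = C_{y,I}\cap (N_{y,I}+(y)),
\]
which is a geometric vertex decomposition. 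Here I use that ${\rm in}_y(I)$ is generated by the initial $y$-forms of the Gröbner basis, or alternatively Lemma \ref{lemma: indep.}.

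Now check the two pieces. $C_{y,I}=(1)$ is GVD by Definition \ref{dfGVD}(a). The contraction of $N_{y,I}$ to $S$ is generated by $c-1$ linearly independent linear forms in $S$, so it is again a linear complete intersection, and it is GVD by induction.

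The only subtle point is choosing the variable so that it appears in exactly one generator; the reduced row echelon form handles this. A possible alternative worry is whether $C_{y,I}=(1)$ is allowed, but this is explicitly covered by clause (a) of Definition \ref{dfGVD}.
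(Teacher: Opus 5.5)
Your proof is correct and follows essentially the same route as the paper. Both arguments row reduce so that some variable $y$ occurs in exactly one generator, take a lex order with $y$ largest to get $C_{y,I}=(1)$ and $N_{y,I}$ generated by the remaining $c-1$ forms, and then induct. The difference is only in bookkeeping. The paper inducts on $n$ and handles a variable that appears in no $L_i$ as a separate case via \cite[Theorem 2.9]{CDSRVT}. Your induction on $c$, using a pivot variable, avoids that case entirely, and you also check unmixedness explicitly, which the paper leaves implicit.
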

\begin{proof}
Assume $I=(L_1,\dots, L_c)$ is a complete intersection generated by
linear forms.  Then $1\leq c\leq n+1$.  Note that because
$I$ is a complete intersection, this forces
$L_1,\ldots,L_c$ to be linearly independent.
We proceed by induction on $n$. The case $n=0$ is clear.

If there is a variable $x_i$ that does not appear in any
of the $L_j$'s, then $I$ can be viewed as an ideal
of $S = k[x_0,\ldots,\hat{x}_i,\ldots,x_n]$.  By induction,
$I$ is a geometrically vertex decomposable
ideal of $S$.  But then $R/I \cong S/I \otimes k[x_i]$,
and the result now follows from \cite[Theorem 2.9]{CDSRVT}.
Hence, we can assume each variable appears in at least one
$L_i$.

Because $L_1,\ldots,L_c$ are linear forms, the algorithm to
find the Gr\"obner basis of $(L_1,\ldots,L_c)$ is equivalent to
applying the row reduction algorithm from linear algebra to 
the matrix of coefficients of $L_1,\dots,L_c$.  That is, we obtain new linear forms $L'_1,\dots,L'_c$ such that $I=(L'_1,\dots,L'_c)$. After possibly rearranging the variables and the order of 
the $L'_i$'s, the matrix of the coefficients 
of $\{L'_1,\ldots,L'_c\}$ has the form
\begin{align*}
 &\begin{array}{ccccccc} \quad \ \scriptstyle{x_0}  &  \quad \scriptstyle{x_1} \quad  & \ \ \cdots & \quad  \scriptstyle{x_{c-1}} & \quad \scriptstyle{x_{c}} & \cdots &  \scriptstyle{x_{n}} \end{array}\\
    \begin{array}{c}\scriptstyle{L'_1}\\\vdots\\ \scriptstyle{L'_{c-1}}\\\scriptstyle{L'_{c}}\end{array}&
\left[ \begin{array}{ccccccc}
0 & 0 & \cdots & a_{1,c-1}&*& \ \cdots \ &* \\
\vdots & \vdots & \vdots & \vdots&\vdots& \ \vdots \ &\vdots  \\
0 & a_{c-1,1} &  \cdots & a_{c-1,c-1}  & *& \ \cdots \ &* \\
a_{c,0}& a_{c,1}& \cdots   & a_{c,c-1}&*&\  \cdots \ &* 
\end{array}
\right].\end{align*}

%If $a_{c,0}=0$, we can view $I=(L'_1,\dots, L'_c)$ as an ideal in $ k[x_1,\dots, x_n]$, and by inductive hypothesis is GVD. 
We may assume $a_{c,0}\neq 0$, since $x_0$ must appear in at least one linear form.  We set $y=x_0$ and
let $<$ be the lexicographical order with $y>x_1>\cdots$,
which is also $y$-compatible.
Then $\{L'_1,\dots, L'_c\}$  is already a Gröbner basis for $I$ with respect to this order and $y$ appears only in $L'_c$. So we obtain the geometric vertex decomposition
$$C_{y,I} = (L'_1,\ldots,L'_{c-1},a_{c,0})=(1) \,\,\,\,\, \text{and} \,\,\,\,\, N_{y,I} = (L'_1,\ldots,L'_{c-1}).$$
By the induction hypothesis, $N_{y,I}$  is 
geometrically vertex decomposable, while $C_{y,I}$ is 
geometrically vertex decomposable by definition. Therefore, we conclude that $I$ is geometrically vertex
decomposable.
\end{proof}

Since $R$ is an $\mathbb{N}$-graded ring, $R = \bigoplus_{t \in \mathbb{N}} R_t$ where $R_t$ denotes the vector space of
homogenous forms of degree $t$.
When $I \subseteq R$ is a homogeneous ideal, observe that the homogeneous pieces $I_t$ are each a finite dimensional vector space over $k$ and subspaces of $R_t$.  We record the degree-by-degree dimensions of the homogeneous pieces using a special function.

\begin{df}
If $I \subseteq R = k[x_0, \ldots, x_n]$ is a homogeneous ideal, then the \emph{Hilbert function} of $R/I$ is the numerical function $HF_{R/I}(-):  \mathbb{N} \rightarrow \mathbb{N}$ where
$$HF_{R/I}(t) = \dim_k (R/I)_t = \dim_k R_t - \dim_k I_t = \binom{n+t}{t} - \dim_k I_t.$$
\end{df}

We can record the Hilbert function as a sequence $$HF_{R/I} = (HF_{R/I}(0), HF_{R/I}(1), HF_{R/I}(2), \ldots) = (h_0, h_1, h_2, \ldots).$$  Doing so, we define the \emph{first difference Hilbert function} by $\Delta HF_{R/I} = (h_0, h_1-h_0, h_2-h_1, \ldots)$.  We then recursively define the \emph{second}, \emph{third}, etc. differences.

\begin{df}
For a variety $V\subseteq \mathbb{P}^n$ of codimension $c$, the {\em h-vector} of $V$ (or of $R/I_V$)
is the $(n - c+1)$-st difference of the Hilbert function of $R/I_V$.
\end{df}

If $I \subseteq R = k[x_0, \ldots, x_n]$ is a homogeneous ideal,  then there exists a unique polynomial $HP_{R/I}(t) \in \mathbb Q[t]$ for which $HF_{R/I}(t) = HP_{R/I}(t)$ for all $t \gg 0$.  We call the polynomial $HP_{R/I}(t)$ the \emph{Hilbert polynomial} of $R/I$.

\begin{df}\label{Hilbertian}
    Let $I \subseteq R = k[x_0,\ldots,x_n]$ and let  $HP_{R/I}(t)$  and $HF_{R/I}(t)$ be the Hilbert polynomial and the Hilbert function of $R/I$, respectively. We say that
    \begin{itemize}
        \item[$(a)$]  $R/I$ is \emph{Hilbertian}  if $HP_{R/I}(t) = HF_{R/I}(t)$ for all $t \geq 0$;
        \item[$(b)$]  $R/I$ is \emph{almost Hilbertian} $HP_{R/I}(t) = HF_{R/I}(t)$ for all $t \geq 1$.
    \end{itemize}
\end{df}

These properties are connected with the geometrically vertex decomposability of the ideal as shown in \cite{NRVT24}:
\begin{thm}{\cite[Corollary 5.8]{NRVT24}}\label{hilbertian}
    Suppose that $I \subseteq R = k[x_0,\ldots,x_n]$ is a homogeneous
    ideal.  If $I$ is a geometrically vertex decomposable ideal,
    then $R/I$ is almost Hilbertian.
\end{thm}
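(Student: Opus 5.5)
The plan is to induct on the number of variables $n+1$, following the recursive structure of Definition~\ref{dfGVD}. Throughout I will use that $HF_{R/I}$ is unchanged under passing to ${\rm in}_y(I)$: this is a Gr\"obner-type degeneration, so $R/I$ and $R/{\rm in}_y(I)$ have the same Hilbert function. Also, for homogeneous $I$ and the standard grading, ${\rm in}_y(I)$, $C_{y,I}$ and $N_{y,I}$ are homogeneous.

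\emph{Base cases.} If $I=(1)$, then $HF_{R/I}\equiv 0\equiv HP_{R/I}$. If $I$ is generated by $n+1-m$ of the variables, then $R/I$ is a polynomial ring in $m$ variables.
\begin{itemize}
\item If $m\ge 1$, then $HF_{R/I}(t)=\binom{m-1+t}{t}$, which is a polynomial in $t$ valid for all $t\ge 0$.
\item If $m=0$, then $HF_{R/I}=(1,0,0,\dots)$ and $HP_{R/I}=0$, so they agree for $t\ge 1$.
\end{itemize}
In every case $R/I$ is almost Hilbertian.

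\emph{Inductive step.} Suppose ${\rm in}_y(I)=C\cap(N+(y))$ with $C=C_{y,I}$ and $N=N_{y,I}$, and $C,N$ geometrically vertex decomposable in $S$. Since $N\subseteq C$ (each generator $q_i$ of $N$ is also a generator of $C$) and both are extended from $S$, I will check that $C\cap(N+(y))=N+yC$. Using that $y$ is a nonzerodivisor modulo ideals extended from $S$, one gets $(N+yC):y=C$. This gives a short exact sequence
$$0\to R/C(-1)\xrightarrow{\cdot y} R/{\rm in}_y(I)\to R/(N+(y))\to 0.$$
Hence
$$HF_{R/I}(t)=HF_{R/C}(t-1)+HF_{S/N}(t),$$
because $R/(N+(y))\cong S/N$.

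Now $R/C\cong (S/C)\otimes_k k[y]$, so $HF_{R/C}(t-1)=\sum_{s=0}^{t-1}HF_{S/C}(s)$. By induction, $HF_{S/C}(s)=P(s)$ for all $s\ge 1$ for some polynomial $P$, and $HF_{S/C}(0)=P(0)+\delta$ for some constant $\delta$. Therefore, for $t\ge 1$,
$$\sum_{s=0}^{t-1}HF_{S/C}(s)=\delta+\sum_{s=0}^{t-1}P(s),$$
which is a polynomial in $t$. Likewise $HF_{S/N}(t)$ agrees with a polynomial for all $t\ge 1$ by induction. The sum of the two pieces is thus a polynomial for $t\ge1$, and this polynomial must be $HP_{R/I}$. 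So $R/I$ is almost Hilbertian.

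\emph{Main obstacle.} The step needing care is the identification ${\rm in}_y(I)=N+yC$ and the injectivity of multiplication by $y$. Both rely on $C$ and $N$ being generated by elements not involving $y$, which Lemma~\ref{lemma: indep.} and the definition provide. A secondary point is checking that the base-case discrepancy at $t=0$ only ever propagates as an additive constant (as $\delta$ above), never as a failure at some $t\ge 1$.
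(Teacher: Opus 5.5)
Your proof is correct. The paper gives no proof of its own to compare with: the statement is quoted from \cite[Corollary 5.8]{NRVT24} and used as a black box. So your argument is a genuinely independent, self-contained proof.

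The steps all check out.
\begin{enumerate}
\item From ${\rm in}_y(I)=C\cap(N+(y))$ and $N\subseteq C$, the modular law gives ${\rm in}_y(I)=N+yC$.
\item Since $C$ and $N$ are extended from $S$, $y$ is a nonzerodivisor modulo each, so ${\rm in}_y(I):y=C$ and ${\rm in}_y(I)+(y)=N+(y)$.
\item This gives the exact sequence $0\to (R/C)(-1)\to R/{\rm in}_y(I)\to S/N\to 0$ and the identity $HF_{R/I}(t)=\sum_{s=0}^{t-1}HF_{S/C}(s)+HF_{S/N}(t)$.
\item In that identity, the only possible discrepancy of $S/C$, at $s=0$, becomes an additive constant for $t\geq 1$.
\end{enumerate}
Your base cases, including $m=0$ where $HF=(1,0,0,\dots)$ and $HP=0$, and the induction on the number of variables are also fine.

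The one input you should cite explicitly rather than describe as ``a Gr\"obner-type degeneration'' is $HF_{R/I}=HF_{R/{\rm in}_y(I)}$. It follows from ${\rm in}_<({\rm in}_y(I))={\rm in}_<(I)$ for a $y$-compatible order $<$ (see \cite{KMY,KR21}), together with the homogeneity of ${\rm in}_y(I)$, which you already checked.

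Your route buys two things: elementarity and a bit of extra generality. You never use unmixedness, Cohen--Macaulayness, or any regularity or $a$-invariant input. So you actually show that every homogeneous ideal admitting a recursive geometric vertex decomposition down to ideals of type (a) has an almost Hilbertian quotient. An argument through Cohen--Macaulayness and a bound of the form $a(R/I)\le 0$ (equivalently, $h$-polynomial of degree at most $\dim R/I$) would yield more, such as regularity information, but at the cost of considerably more machinery.
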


We can now prove that if $I_{\mathbb{X}}$ is an ideal 
of {\it any} set $\mathbb{X}$ of  more than $n+1$ points 
in $\mathbb{P}^n$, then 
$I_{\mathbb{X}}$ cannot be a geometrically vertex decomposable ideal.
\begin{thm}\label{points}
    Let $\mathbb{X} \subseteq \mathbb{P}^n$ be any set of
    points.  If $|\mathbb{X}| > n+1$, then $I_{\mathbb{X}}$ is not
    geometrically vertex decomposable.
\end{thm}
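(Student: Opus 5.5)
We argue by contradiction using Theorem \ref{hilbertian}. Suppose $I_{\mathbb{X}}$ is geometrically vertex decomposable, where $\mathbb{X}$ is a finite set of $s = |\mathbb{X}| > n+1$ points. Since $I_{\mathbb{X}}$ is homogeneous, Theorem \ref{hilbertian} says that $R/I_{\mathbb{X}}$ is almost Hilbertian, so $HF_{R/I_{\mathbb{X}}}(t) = HP_{R/I_{\mathbb{X}}}(t)$ for all $t \geq 1$.

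For a finite set of points the Hilbert polynomial is the constant $s$, so almost Hilbertian forces $HF_{R/I_{\mathbb{X}}}(1) = s$. On the other hand, $HF_{R/I_{\mathbb{X}}}(1) = \dim_k R_1 - \dim_k (I_{\mathbb{X}})_1 \leq \dim_k R_1 = n+1$. Combining these gives $s \leq n+1$, which contradicts the hypothesis $|\mathbb{X}| > n+1$. Hence $I_{\mathbb{X}}$ is not geometrically vertex decomposable.

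The only points to check are routine facts. First, $I_{\mathbb{X}}$ is homogeneous; it is also unmixed, though that is not needed here. Second, the Hilbert polynomial of the reduced scheme $\mathbb{X}$ equals its cardinality. I expect no real obstacle: the whole argument is the single degree-one comparison above.
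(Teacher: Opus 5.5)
Your proposal is correct and follows the paper's proof exactly. The Hilbert polynomial of $R/I_{\mathbb{X}}$ is the constant $|\mathbb{X}|$, while $HF_{R/I_{\mathbb{X}}}(1) \leq n+1 < |\mathbb{X}|$, so $R/I_{\mathbb{X}}$ is not almost Hilbertian and Theorem \ref{hilbertian} shows $I_{\mathbb{X}}$ is not geometrically vertex decomposable.
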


\begin{proof}
    It is well known that the Hilbert polynomial of $R/I_{\mathbb{X}}$ 
    for any set of points is $HP_{R/I_\mathbb{X}}(t) = |\mathbb{X}|$, i.e.,
    the constant polynomial.  
    However, since $I_\mathbb{X} \subseteq R = k[x_0,\ldots,x_n]$, we have 
    $$HF_{R/I}(1) \leq \dim_k R_1 = n+1 < |\mathbb{X}|.$$
    The conclusion now follows from Theorem \ref{hilbertian}.
\end{proof}

%%%%%%%%%%%%%%%%%%%%%%%%%%%%%%%%%%%%%%%%%%%%%%%%%%%%%%%

\section{Star Configurations in $\PP^n$}\label{sec.starconfig}

In this section, we introduce the ideals of star configurations in 
$\mathbb{P}^n$. We also provide some  conditions for which these 
ideals are not geometrically vertex decomposable.

Let $\mathbb L = \{L_1,\ldots,L_\ell \}\subseteq R = k[x_0,\dots,x_n]$ 
be a set of  $\ell \geq n$ linear forms, such that any subset of
size less than or equal to $\min\{\ell,n+1\}$ is linearly independent 
in $R_1$.

\begin{df}\label{defn:starconfig}
Fix a positive integer $1 \leq c\leq n$, 
and let $\mathbb L = \{L_1,\ldots,L_\ell \}\subseteq R$ be as above.
We define the ideal
$$I_{\mathbb{X}(\ell,c)} = 
\bigcap_{\sigma = \{j_1,\ldots,j_{c}\} \subseteq [\ell]} \left( L_{j_{1}},\ldots,L_{j_{c}} \right),$$
where $\sigma $ is a subset of $ [\ell]=\{1,\dots,\ell\}$ with $|\sigma| = c$.
The variety $\mathbb{X}(\ell, c)$ defined by $I_{\mathbb{X}(\ell,c)}$ is called
a {\it star configuration} in $\mathbb{P}^n$ of codimension 
$c$ associated to $\mathbb L$.
\end{df}

Note that when $c=n$, then $\mathbb{X}(\ell, n)$ is a zero-dimensional
scheme in $\mathbb{P}^n$.
In  fact, $\mathbb{X}(\ell,n)$ is a set of 
$\binom{\ell}{n}$ points in  $\mathbb{P}^n$ called a {\it star configuration of points} in $\mathbb{P}^n$. 

Ideals of star configurations of arbitrary 
codimension were studied in \cite{GHM13}, 
where their $h$-vector and minimal generators were determined.

\begin{prop}{\cite[Proposition 2.9]{GHM13}}\label{hvec}
  Let $\mathbb{X}(\ell,c) \subseteq \mathbb{P}^n$ be a 
  star configuration of codimension $c$ associated to
  $\mathbb L = \{L_1,\ldots,L_\ell \}$.
  Then
  \begin{enumerate}
      \item the $h$-vector of $\mathbb{X}(\ell,c)$ has exactly $\ell - c + 1$ entries and is equal to: $$\left(1, \binom{c}{c-1},\binom{c+1}{c-1}, \dots, \binom{\ell-1}{c-1}\right).$$
%\item  $deg \mathbb{X}(\ell,c) =\binom{\ell}{c}$.
\item the minimal generators of $I_{\mathbb{X}(\ell,c)}$ are
$$\{L_{j_1}\cdots L_{j_{\ell-c+1}} ~|~ \{j_1,\ldots,j_{\ell-c+1}\} \subseteq [\ell]\}.$$
In particular, all the minimal generators of $I_{\mathbb{X}(\ell,c)}$ have
degree $\ell-c+1$.
  \end{enumerate}
\end{prop}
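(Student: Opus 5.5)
The plan is to argue by induction on $\ell$ (and on $n$), using $L_\ell$ as a hyperplane section, in the spirit of \cite{GHM13}. Write $I=I_{\mathbb{X}(\ell,c)}$ and $H=V(L_\ell)\cong\mathbb{P}^{n-1}$. The base cases are $\ell=c$ and $c=1$. If $\ell=c$, then $I=(L_1,\ldots,L_c)$ is a linear complete intersection, its $h$-vector is $(1)$ and it has the single generator of degree $1$. If $c=1$, then $I=(L_1\cdots L_\ell)$ is a reduced hypersurface, with $h$-vector $(1,1,\ldots,1)$ of length $\ell$ and generated in degree $\ell$. Along the induction I will also carry the statement that $R/I$ is Cohen--Macaulay (ACM), since it is needed to control the hyperplane section.

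For the inductive step, with $\ell>c\geq 2$, I would establish two ideal-theoretic identities. The first is
$$(I:L_\ell)=I_{\mathbb{X}(\ell-1,c-1)},$$
where the right side is the star configuration defined by $L_1,\ldots,L_{\ell-1}$. This is a primary-decomposition computation. For the components $(L_{j_1},\ldots,L_{j_c})$ that contain $L_\ell$, the colon with $L_\ell$ is the unit ideal. For the others, the colon equals the component itself, which is prime and does not contain $L_\ell$ by the linear independence hypothesis. The intersection of the surviving components is $I_{\mathbb{X}(\ell-1,c)}$, and this contains $I_{\mathbb{X}(\ell-1,c-1)}$, which gives the formula.

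The second identity is that $(I+(L_\ell))/(L_\ell)$ is the ideal of the star configuration of codimension $c-1$ in $H$ defined by the restrictions $\bar L_1,\ldots,\bar L_{\ell-1}$. These restrictions still satisfy the independence hypothesis in $H$, because any $\leq n$ of them together with $L_\ell$ are independent in $R_1$. This gives the exact sequence
$$0\to R/I_{\mathbb{X}(\ell-1,c-1)}(-1)\xrightarrow{\ \cdot L_\ell\ } R/I\to R/(I+(L_\ell))\to 0.$$

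The main obstacle is this second identity. Set-theoretically it is clear, since $\mathbb{X}(\ell,c)\cap H$ is exactly the star configuration in $H$. The content is that $I+(L_\ell)$ is saturated and radical. I would first prove the containment $I+(L_\ell)\subseteq J$, where $J$ is the star ideal in $H$ (lifted to $R$). I would then compare Hilbert functions:
\begin{itemize}
\item by induction, $HF_{R/I_{\mathbb{X}(\ell-1,c-1)}}$ and $HF_{R/J}$ are known;
\item the sequence shows that $HF_{R/I}(t)-HF_{R/I_{\mathbb{X}(\ell-1,c-1)}}(t-1)=HF_{R/(I+(L_\ell))}(t)\geq HF_{R/J}(t)$;
\item equality in the large-degree limit (degrees, via $\binom{\ell}{c}=\binom{\ell-1}{c-1}+\binom{\ell-1}{c}$), together with the generator statement, should force equality in every degree.
\end{itemize}
If this comparison proves delicate, I would instead prove the generator statement first. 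The ideal $I'$ generated by all $(\ell-c+1)$-fold products satisfies $I'\subseteq I$. The inductive descriptions of $I_{\mathbb{X}(\ell-1,c-1)}$ and of $J$ then give $I'\supseteq L_\ell\, I_{\mathbb{X}(\ell-1,c-1)}+(\text{lifts of generators of }J)$. A snake-lemma argument on the sequence then gives $I'=I$ and $I+(L_\ell)=J+(L_\ell)$ simultaneously. Since $R/I_{\mathbb{X}(\ell-1,c-1)}$ and $R/J$ are ACM by induction, the sequence also shows that $R/I$ is ACM, which propagates the extra inductive hypothesis.

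With the sequence in hand, the $h$-vector follows by bookkeeping. Since everything is ACM, $h$-vectors are additive along the sequence with the shift by one degree, so
$$h_{\mathbb{X}(\ell,c)}(t)=h_{\mathbb{X}(\ell-1,c-1)}(t-1)+h_{J}(t).$$
Inserting the inductive values $\binom{t+c-2}{c-2}$ and $\binom{t+c-2}{c-1}$ and applying Pascal's rule gives $\binom{t+c-1}{c-1}$ for $0\le t\le \ell-c$, which is exactly the claimed vector of length $\ell-c+1$.

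For part (2), the generators come from the decomposition $I=L_\ell\, I_{\mathbb{X}(\ell-1,c-1)}+(\text{products of }\ell-c+1\text{ forms among }L_1,\ldots,L_{\ell-1})$. All of these are products of $\ell-c+1$ of the $L_j$. Minimality holds because the $h$-vector ends in degree $\ell-c$, so no element of $I$ has degree $\leq \ell-c$, and all the generators share the single degree $\ell-c+1$.
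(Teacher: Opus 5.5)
The paper gives no proof of this proposition (it is quoted from \cite{GHM13}), so your argument has to stand on its own. Its architecture is sound: cut by $L_\ell$, induct on $\ell$ with base cases $\ell=c$ and $c=1$, and identify $I+(L_\ell)$ with the codimension $c-1$ star configuration in $H$. The colon identity at its centre, however, is wrong. Your own computation shows that the components not containing $L_\ell$ survive unchanged, so $(I:L_\ell)=I_{\mathbb{X}(\ell-1,c)}$. The remark that this ideal contains $I_{\mathbb{X}(\ell-1,c-1)}$ does not give equality, and equality fails already for $\mathbb{L}=\{x_0,x_1,x_2\}$, $c=2$: there $I:x_2=(x_0,x_1)$, while $I_{\mathbb{X}(2,1)}=(x_0x_1)$.

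Everything built on this identity breaks:
\begin{itemize}
\item Your displayed sequence cannot be exact. $R/I_{\mathbb{X}(\ell-1,c-1)}$ has Krull dimension $n+2-c>\dim R/I$, so it cannot embed in $R/I$.
\item The recursion $h_{\mathbb{X}(\ell-1,c-1)}(t-1)+h_J(t)$ as written gives $(1,2,2,1)$ instead of $(1,2,3)$ for $\ell=4$, $c=2$.
\item The decomposition $I=L_\ell I_{\mathbb{X}(\ell-1,c-1)}+(\cdots)$ used for part (2) is false. $L_\ell$ times an $(\ell-c+1)$-fold product has $\ell-c+2$ factors, so no degree-$(\ell-c+1)$ generator involving $L_\ell$ is produced. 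In the example you would get $(x_0x_1)$ instead of $(x_0x_1,x_0x_2,x_1x_2)$.
\item The snake-lemma comparison of $I'$ with $I$ along that sequence inherits the same error.
\end{itemize}

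The repair is to use $I_{\mathbb{X}(\ell-1,c)}$ throughout:
\[
0\to R/I_{\mathbb{X}(\ell-1,c)}(-1)\xrightarrow{\ \cdot L_\ell\ }R/I\to R/\bigl(I_{\mathbb{X}(\ell-1,c-1)}+(L_\ell)\bigr)\to 0,
\]
where all three terms have dimension $n+1-c$. The values you actually inserted, $\binom{t+c-2}{c-1}=h_{\mathbb{X}(\ell-1,c)}(t-1)$ and $\binom{t+c-2}{c-2}=h_J(t)$, belong to this corrected sequence. So does your degree identity $\binom{\ell}{c}=\binom{\ell-1}{c}+\binom{\ell-1}{c-1}$. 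Your generator argument then works directly. For $f\in I$, induction in $H$ gives $f=g+L_\ell h$ with $g$ a combination of $(\ell-c+1)$-fold products of $L_1,\ldots,L_{\ell-1}$, so $g\in I$. Hence $h\in I:L_\ell=I_{\mathbb{X}(\ell-1,c)}$, which by induction is generated by $(\ell-c)$-fold products. This yields $I=L_\ell I_{\mathbb{X}(\ell-1,c)}+I_{\mathbb{X}(\ell-1,c-1)}$, which is exactly Lemma~\ref{induction} once you note that $L_\ell$ is a nonzerodivisor modulo $I_{\mathbb{X}(\ell-1,c)}$.

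One smaller gap remains. Minimality needs more than all products sharing degree $\ell-c+1$: you must know the $\binom{\ell}{\ell-c+1}$ products are $k$-linearly independent. This follows from the $h$-vector, since the Hilbert series gives $\dim_k I_{\ell-c+1}=\binom{\ell}{c-1}=\binom{\ell}{\ell-c+1}$.
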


As part of our goal to prove Theorem \ref{mainthm},
we first consider the case when the number of linear forms is strictly greater than $n+1$.  The codimension $n$ case follows directly from
the next remark.

\begin{rmk}\label{rmk.codim_n_case}
 Since the star configuration of points $\mathbb{X}(\ell,n)$
has $\binom{\ell}{n} > n+1$ points whenever $\ell > n+1$, we obtain as an immediate consequence from Theorem \ref{points} that the ideal
of a star configuration of points with $\ell > n+1$ is not geometrically vertex decomposable.  
\end{rmk}

The next theorem generalizes Remark \ref{rmk.codim_n_case} to 
star configurations of arbitrary codimension with $\ell>n+1$.

\begin{thm}\label{case>n+1}
    For any star configuration $\mathbb{X}(\ell,c) \subseteq 
    \mathbb{P}^n$, if $\ell > n+1$, then $I_{\mathbb{X}(\ell,c)}$ is
    not geometrically vertex decomposable. 
\end{thm}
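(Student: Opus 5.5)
The plan is to argue as in Theorem \ref{points}, via Theorem \ref{hilbertian}: show that $R/I_{\mathbb{X}(\ell,c)}$ is not almost Hilbertian when $\ell>n+1$. This needs two ingredients: the $h$-vector of Proposition \ref{hvec}, and the fact that star configurations are arithmetically Cohen--Macaulay \cite{GHM13}. The latter lets us pass from the $h$-vector to the Hilbert series.

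Set $d=n-c+1=\dim R/I_{\mathbb{X}(\ell,c)}$. By Cohen--Macaulayness and Proposition \ref{hvec}(1), the Hilbert series is
$$HS(z)=\frac{h(z)}{(1-z)^d}, \qquad h(z)=\sum_{i=0}^{\ell-c}\binom{c-1+i}{c-1}z^i .$$
Its top coefficient $h_{\ell-c}=\binom{\ell-1}{c-1}$ is nonzero. Divide with remainder, $h(z)=q(z)(1-z)^d+r(z)$ with $\deg r<d$. Then
$$HS(z)=q(z)+\frac{r(z)}{(1-z)^d}.$$
Because $\deg r<d$, the coefficients of $r(z)/(1-z)^d$ agree with a single polynomial in $t$ for every $t\geq 0$, not merely for $t\gg0$. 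That polynomial must therefore be $HP_{R/I}(t)$. Hence $HF_{R/I}(t)-HP_{R/I}(t)$ is exactly the coefficient of $z^t$ in $q(z)$.

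Now $\deg q=\deg h-d=(\ell-c)-(n-c+1)=\ell-n-1$. Its leading coefficient is $(-1)^d h_{\ell-c}\neq 0$. The hypothesis $\ell>n+1$ gives $a:=\ell-n-1\geq 1$, and at $t=a$ we get $HF_{R/I}(a)\neq HP_{R/I}(a)$. So $R/I_{\mathbb{X}(\ell,c)}$ is not almost Hilbertian, and Theorem \ref{hilbertian} shows that $I_{\mathbb{X}(\ell,c)}$ is not geometrically vertex decomposable.

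The main obstacle is the middle step: justifying that the difference $HF-HP$ is exactly the polynomial $q$, which uses the standard fact that $r(z)/(1-z)^d$ with $\deg r<d$ is polynomial-valued from $t=0$. Equally important is that Cohen--Macaulayness is what makes the Hilbert series have numerator the $h$-vector. Without it, the $h$-vector as defined (an iterated difference) would not directly give $HS$, so I would cite \cite{GHM13} explicitly for the ACM property. The argument is a direct generalization of Theorem \ref{points}: in codimension $n$ it reproduces Remark \ref{rmk.codim_n_case}, where the failure at $t=1$ comes from $\binom{\ell}{n}>n+1$.
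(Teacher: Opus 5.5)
Your proposal is correct and follows the paper's strategy: use the $h$-vector from Proposition \ref{hvec} to show that $R/I_{\mathbb{X}(\ell,c)}$ is not almost Hilbertian when $\ell>n+1$, then invoke Theorem \ref{hilbertian}. The paper does the key step with finite differences (almost Hilbertian forces $\Delta^{n-c+1}HF(t)=0$ for $t\geq n-c+2$, so the $h$-vector has at most $n-c+2$ entries), while your polynomial division is the generating-function form of the same fact and also locates the failure at $t=\ell-n-1$. One correction: Cohen--Macaulayness is not needed, because the paper defines the $h$-vector as the $(n-c+1)$-st difference of $HF$, and for that one always has $\sum_t h_t z^t=(1-z)^{n-c+1}HS(z)$.
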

\begin{proof}    We first show that the condition of being 
almost Hilbertian (see Definition \ref{Hilbertian}) forces $\ell \le n+1$. Therefore, 
when $\ell > n+1$, the ideal $I_{\mathbb{X}(\ell,c)}$ 
cannot be geometrically vertex decomposable by Theorem \ref{hilbertian}.
   
Let $HP(t)$ denote the Hilbert polynomial of 
$R/I_{\mathbb{X}(\ell,c)}$. Since $HP(t)$ has degree $n-c$, 
its $(n-c)$-th difference is constant, i.e.,
$\Delta^{n-c}HP(t)=C$ for some constant $C$.  Here 
$\Delta HP(t) = HP(t) - HP(t-1)$ and $\Delta^iHP(t) =
\Delta^{i-1} HP(t) - \Delta^{i-1}HP(t-1)$ for $i \geq 2$.

Assume that $R/I$ with $I=I_{\mathbb{X}(\ell,c)}$ is almost Hilbertian. Then 
$HP_{R/I}(t) = HF_{R/I}(t)$ for all $t \geq 1$, and therefore the $(n-c)$-th difference of $HF_{R/I}(t)$ satisfies
 $$\Delta^{n-c}HF_{R/I}(t) = \Delta^{n-c}HP_{R/I}(t)=C \qquad \text{for all } t \geq n-c+1.$$
 It follows that the $(n-c+1)$-st difference of the Hilbert function satisfies
 $$\Delta^{n-c+1}HP_{R/I}(t)=0 \qquad  \text{for all } t \geq n-c+2.$$
 By definition, the $(n-c+1)$-st difference of the Hilbert function of $R/I$ is the $h$-vector of $\mathbb{X}(\ell,c)$, and hence this vector can have at most $n-c+2$ nonzero entries.
    
By Proposition \ref{hvec}(1), the $h$-vector of a star configuration $\mathbb{X}(\ell,c)$ has exactly $\ell-c+1$ entries. It follows that, in order for $R/I_{\mathbb{X}(\ell,c)}$ to be almost Hilbertian, we must have
$\ell-c+1 \leq n-c+2$, and therefore $\ell \leq n+1$.
\end{proof}

When $\ell \leq n+1$, the situation is more subtle: the ideals of
star configurations  may or may not be geometrically vertex decomposable depending on $\ell$ and $c$ as well as the particular chosen set of linear 
forms $\mathbb L$.  Two examples that illustrate this 
behaviour were already given in Example \ref{ex.intro}.  
We work out the details for the
third example given in the introduction.
%Particularly interesting is the case $c=\ell-1$, where the GVD property depends on choices of the linear forms. 
%To demonstrate this, we present some examples of star configurations of points in $\mathbb{P}^2$ with $\ell=n+1=3$. 
All computations in the following examples were performed using \texttt{Macaulay2} \cite{M2}.

\begin{ex} \label{exGVD} 
Consider the star configuration of points in $\PP^2$ defined by the 
set of linear forms $\mathbb L=\left\{x_2,x_1+x_2,x_0+x_1+x_2\right\}$. 
The ideal of the star configuration in this case is
 $$I:=I_{\mathbb{X}(3,2)} = \left(x_1\,x_2+x_2^2,\,x_0\,x_2+x_1\,x_2+x_2^2,\,x_0\,x_1+x_1^2+x_0\,x_2+2\,x_1\,x_2+x_2^2\right).$$

Set $y=x_0$.  The lexicographic order $<$ on the monomials 
of $k[x_0,x_1,x_2]$ with $x_0 > x_1 > x_2$ is a  $y$-compatible order.  A Gröbner basis for $I$ with respect to this order is
$$\mathcal{B}=\left\{x_{1}x_{2}+x_{2}^{2}, \ x_{0}x_{2}, 
\ x_{0}x_{1}+x_{1}^{2}-x_{2}^{2}\right\}.$$
Rewriting each basis element as $(x_0)^{\delta_i}q_i+r_i$, 
with $\delta_i=0,1$, we obtain
$$\mathcal{B}=\left\{x_0^0(x_{1}x_{2}+x_{2}^{2}), \ x_{0}^1(x_{2}), \ x_{0}^1(x_{1})+x_{1}^{2}-x_{2}^{2}\right\}.$$
By Definition~\ref{dfGVD}, $C_{y,I}$ is generated by the $q_i$'s, while $N_{y,I}$ is generated only by the $q_i$'s such that $\delta_i=0$. Hence,
    $$C_{y,I}=\left(x_2,x_1, x_{1}x_{2}+x_{2}^{2}\right)=\left(x_1,x_2\right)\ ~\mbox{and}~\ N_{y,I}=\left(x_{1}x_{2}+x_{2}^{2}\right).$$ 
This gives a geometric vertex decomposition of $I$ with respect to $y$. 
Since $C_{y,I}$ is generated by indeterminates, it is 
geometrically vertex decomposable. For $N_{y,I}$, the generators already 
form a Gröbner basis for the lexicographical order on $k[x_1,x_2]$ that is compatible with $y'=x_1$. Then
 $$C_{y',N_{y,I}}=(x_{2}) ~\mbox{and}~ N_{y',N_{y,I}}=(0)$$ 
give us a geometric vertex decomposition of  $N_{y,I}$ with respect to $y'$.
Since the zero ideal and $(x_2)$ are geometrically
vertex decomposable, we conclude that $N_{y,I}$ and, as a consequence, $I$ are geometrically vertex decomposable.
\end{ex}

\begin{ex}
Consider the ideal of the star configuration of points in $\PP^2$ 
constructed
from $\mathbb L=\left\{x_2,x_0+x_1, x_0+2x_1+3x_2\right\}$.  The defining
ideal is
    $$I_{\mathbb{X}(3,2)}=\left(x_{1}^{2}-2\,x_{0}x_{2}-3\,x_{2}^{2},\,x_{0}x_{1}+
      3\,x_{0}x_{2}+x_{1}x_{2}+3\,x_{2}^{2},\,x_{0}^{2}-2\,x_{0}x_{2}-3\,x_{2
      }^{2}\right).$$
      In this case, since for any $x_i$ we have an element in $I$ of degree two that is not squarefree in $x_i$, we conclude by Lemma \ref{mindeg} that we cannot have a Gröbner basis that is squarefree in any variable $x_i$. Hence, by Lemma~\ref{lemma:squarefree}, we cannot obtain a geometric vertex decomposition with respect to any of the variables, so the ideal of the star configuration is not
      geometrically vertex decomposable.
\end{ex}

\begin{ex} 
As we illustrate in this example, sometimes
one may need to reiterate the procedure a few times before
determining whether or not an ideal is geometrically vertex
decomposable.  In particular, consider the star configuration $\mathbb{X}(3,2)$ in $\PP^2$ defined by 
    $$\mathbb L=\left\{x_0+4x_1,x_0+x_1,x_0+2x_1+3x_2\right\}.$$ 
     In this case, although the associated ideal $I$ admits a geometric vertex decomposition, it is not geometrically vertex decomposable.
    %$$I=\left(2x_1^2+3x_0x_2+9x_1x_2,x_0x_1-3x_0x_2-6x_1x_2,x_0^2+9x_0x_2+12x_1x_2\right).$$
    In particular, setting $y=x_2$, we use the lexicographical order based on $x_2>x_1>x_0$. 
This ordering is a $y$-compatible monomial order, and we obtain 
the following Gröbner basis for $I$:
    $$\mathcal{B}=\left\{x_2(3x_0)+2x_0x_1+x_0^2, \ x_2^0(4x_{1}^{2}+5x_{0}x_{1}+x_{0}^{2}), \ x_2(6x_{1})-3x_{0}x_{1}-x_{0}^{2}\right\}.$$
Then we have
    $$C_{y,I}=\left(3x_0,6x_1, x_0^2+5x_0x_1+4x_1^2\right)=\left(x_0,x_1\right)~\mbox{and}~ N_{y,I}=\left(x_0^2+5x_0x_1+4x_1^2\right).$$    

This gives a geometric vertex decomposition of $I$. While $C_{y,I}$ is
geometrically vertex decomposable, the ideal $N_{y,I}$ is not geometrically vertex decomposable by Lemma \ref{lemma:squarefree}, since we cannot find squarefree Gröbner bases in any variable. 
Therefore, the ideal $I$ is not geometrically
vertex decomposable.

\end{ex}

%%%%%%%%%%%%%%%%%%%%%%%%%%%%%%%%%%%%%%%%%%%%%

\section{Classification of GVD Star Configurations}\label{sec.classification}

In this section we prove Theorem~\ref{mainthm}, a classification
of the ideals of star configurations that are geometrically
vertex decomposable.
We first establish some lemmas and properties that will be used in the proof.

\begin{lemma}\label{induction}
    Let $\mathbb L = \{L_1,\ldots,L_\ell \}\subseteq R = k[x_0,\dots,x_n]$ 
    be a set of  linear forms such that any subset of
    size less than or equal to $\min\{\ell,n+1\}$ is linearly independent.
    Assume $c<\ell$. Then for any $i=1,\dots,\ell$, the ideal of the star configuration of codimension $c$ associated to $\mathbb L$ can be written as
    $$I_{\mathbb{X}(\ell,c)} = \left(I_{\mathbb{X}(\ell-1,c)}\cap(L_i)\right)+ I_{\mathbb{X}(\ell-1,c-1)},$$
    where $I_{\mathbb{X}(\ell-1,c)}$ and $I_{\mathbb{X}(\ell-1,c-1)}$  are the ideals of the star configuration associated to $\mathbb L \setminus \{L_i\}$ of codimensions $c$ and $c-1$, respectively.
\end{lemma}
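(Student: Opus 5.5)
The identity to prove is
$$I_{\mathbb{X}(\ell,c)} = \left(I_{\mathbb{X}(\ell-1,c)}\cap(L_i)\right)+I_{\mathbb{X}(\ell-1,c-1)}.$$
By symmetry we may take $i=\ell$. The plan is to prove the two inclusions separately. The inclusion ``$\supseteq$'' will use only the intersection description in Definition~\ref{defn:starconfig}. The inclusion ``$\subseteq$'' will use the explicit minimal generators from Proposition~\ref{hvec}(2).

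For ``$\supseteq$'', fix any $\sigma\subseteq[\ell]$ with $|\sigma|=c$; we must show both summands lie in $(L_j : j\in\sigma)$.
\begin{enumerate}
\item If $\ell\in\sigma$, then $I_{\mathbb{X}(\ell-1,c)}\cap(L_\ell)\subseteq (L_\ell)\subseteq (L_j:j\in\sigma)$. Also $\sigma\setminus\{\ell\}$ is a $(c-1)$-subset of $[\ell-1]$, so $I_{\mathbb{X}(\ell-1,c-1)}\subseteq (L_j:j\in\sigma\setminus\{\ell\})$.
\item If $\ell\notin\sigma$, then $\sigma\subseteq[\ell-1]$, so $I_{\mathbb{X}(\ell-1,c)}\subseteq(L_j:j\in\sigma)$. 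Any $(c-1)$-subset $\tau\subset\sigma$ gives $I_{\mathbb{X}(\ell-1,c-1)}\subseteq(L_j:j\in\tau)\subseteq (L_j:j\in\sigma)$.
\end{enumerate}
Intersecting over all $\sigma$ gives the inclusion. When $c=1$, the ideal $I_{\mathbb{X}(\ell-1,0)}$ has to be interpreted; with the convention that it is the unit ideal $R$, case (2) still works (there is no $\tau$, but $R\subseteq R$ trivially fails to matter since $\sigma=\{j\}$ and we only need the first summand there). We should state this convention explicitly, or treat $c=1$ directly: there everything is principal, $I_{\mathbb{X}(\ell,1)}=(L_1\cdots L_\ell)$, and the identity is immediate.

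For ``$\subseteq$'', we apply Proposition~\ref{hvec}(2) to $\mathbb{L}$. The linear-independence hypothesis on $\mathbb{L}$ passes to $\mathbb{L}\setminus\{L_\ell\}$, so the proposition also applies there. Thus $I_{\mathbb{X}(\ell,c)}$ is generated by the products $L_{j_1}\cdots L_{j_{\ell-c+1}}$ over all $(\ell-c+1)$-subsets of $[\ell]$. Take such a subset $J$.
\begin{enumerate}
\item If $\ell\notin J$, then $J$ is an $(\ell-c+1)$-subset of $[\ell-1]$. Since $(\ell-1)-(c-1)+1=\ell-c+1$, the product is a minimal generator of $I_{\mathbb{X}(\ell-1,c-1)}$.
\item If $\ell\in J$, then $J\setminus\{\ell\}$ has size $\ell-c=(\ell-1)-c+1$, and $c\le \ell-1$ by assumption. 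So $\prod_{j\in J\setminus\{\ell\}}L_j$ is a generator of $I_{\mathbb{X}(\ell-1,c)}$. The full product is therefore this generator times $L_\ell$, which lies in $I_{\mathbb{X}(\ell-1,c)}\cap(L_\ell)$.
\end{enumerate}

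The main obstacle is checking that Proposition~\ref{hvec} really applies to the smaller configurations. In particular we need $\ell-1\ge c$, which holds since $c<\ell$. The definition also requires the number of forms to be at least $n$, and that may fail for $\mathbb{L}\setminus\{L_\ell\}$. That requirement is not needed for the generator description, but if we prefer not to lean on this, we can instead prove ``$\subseteq$'' directly. By the inclusion-exclusion argument of \cite{GHM13}, the product ideal $(L_{j_1}\cdots L_{j_{\ell-c+1}})$ equals the intersection in Definition~\ref{defn:starconfig}, and we would cite that. The edge case $c=1$ is handled as described above.
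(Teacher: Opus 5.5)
Your proof is correct once one convention is fixed (see the last paragraph), and it takes a different route from the paper.

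\textbf{Comparison with the paper.} The paper argues by a chain of equalities. It splits $\bigcap_{|\sigma|=c}(L_j : j\in\sigma)$ according to whether $\ell\in\sigma$. It rewrites the part with $\ell\in\sigma$ as $(L_\ell)+\bigcap_{\tau}(L_j:j\in\tau)$, over $(c-1)$-subsets $\tau\subseteq[\ell-1]$. It then applies the modular law $\mathfrak a\cap(\mathfrak b+\mathfrak c)=(\mathfrak a\cap\mathfrak b)+\mathfrak c$ for $\mathfrak c\subseteq\mathfrak a$, using $I_{\mathbb{X}(\ell-1,c-1)}\subseteq I_{\mathbb{X}(\ell-1,c)}$.

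In that route, the rewriting $\bigcap_\tau\bigl((L_\ell)+(L_j:j\in\tau)\bigr)=(L_\ell)+\bigcap_\tau(L_j:j\in\tau)$ is the only place general position enters, since intersection does not distribute over sums in general. The paper states it without justification. Proving it amounts to showing that the codimension $c-1$ star configuration ideal of the images of $L_1,\ldots,L_{\ell-1}$ in $R/(L_\ell)$ is the image of $\bigcap_\tau(L_j:j\in\tau)$. That is a statement of the same type as Proposition~\ref{hvec}(2).

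Your two-inclusion argument avoids this step. The inclusion $\supseteq$ is purely formal. All the nontrivial input is a single application of Proposition~\ref{hvec}(2) to $\mathbb{L}$ itself. The paper's route, in exchange, stays at the level of sums and intersections and never uses the generators of $I_{\mathbb{X}(\ell,c)}$.

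\textbf{Your ``main obstacle'' is not there.} You never need Proposition~\ref{hvec} for $\mathbb{L}\setminus\{L_\ell\}$, because membership follows by counting. If $|J|=\ell-c+1$ and $\ell\notin J$, then $J$ meets every $(c-1)$-subset of $[\ell-1]$, since $|J|+c-1=\ell>\ell-1$. Hence $\prod_{j\in J}L_j\in I_{\mathbb{X}(\ell-1,c-1)}$. Similarly, $J\setminus\{\ell\}$ meets every $c$-subset of $[\ell-1]$, so $\prod_{j\in J\setminus\{\ell\}}L_j\in I_{\mathbb{X}(\ell-1,c)}$. So the worry about the hypothesis $\ell\ge n$ for the smaller configuration, and the fallback citation, can be dropped.

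\textbf{The fix for $c=1$.} The convention $I_{\mathbb{X}(\ell-1,0)}=R$ is wrong. With it, the right-hand side would be all of $R$, while $I_{\mathbb{X}(\ell,1)}=(L_1\cdots L_\ell)$. Your case $\sigma=\{\ell\}$ of $\supseteq$ would also fail.

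The correct reading is $I_{\mathbb{X}(\ell-1,0)}=(0)$. This is what the defining intersection gives literally: the only $0$-subset of $[\ell-1]$ is $\emptyset$, which contributes the ideal generated by no forms. It is also what the generator description gives, since there are no $\ell$-subsets of $[\ell-1]$. It matches the paper's computation $N_{y,I}=(0)$ in the base case of Theorem~\ref{maintheorem4}. With this convention both of your inclusions go through verbatim for $c=1$, and no separate treatment is needed.
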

\begin{proof}
    Without loss of generality, we can assume $L_i=L_\ell$. We have
    \begin{align*}
I_{\mathbb{X}(\ell,c)}  & = \bigcap_{\{j_1,\dots,j_c\} \subseteq [\ell]}(L_{j_1}, \ldots, L_{j_c}) \\
& = \left[\bigcap_{\{j_1,\dots,j_c\} \subseteq [\ell-1]}(L_{j_1}, \ldots, L_{j_c})\right] \cap \left[\bigcap_{\{j_1,\dots,j_{c-1}\} \subseteq [\ell-1]}(L_{j_1}, \ldots, L_{j_{c-1}}, L_\ell)\right] \\
& = \left[\bigcap_{\{j_1,\dots,j_c\} \subseteq [\ell-1]}(L_{j_1}, \ldots, L_{j_c})\right] \cap \left[( L_\ell)+\bigcap_{\{j_1,\dots,j_{c-1}\} \subseteq [\ell-1]}(L_{j_1}, \ldots, L_{j_{c-1}})\right]
\\
& = \left[( L_\ell) \cap \bigcap_{\{j_1,\dots,j_c\} \subseteq [\ell-1]}(L_{j_1}, \ldots, L_{j_c})\right] + \left[\bigcap_{\{j_1,\dots,j_{c-1}\} \subseteq [\ell-1]}(L_{j_1}, \ldots, L_{j_{c-1}})\right],
\end{align*}
where we used the fact that if $K \subseteq I$, then
$I\cap (J+K)  = (I \cap J) + K$ in the fourth equality.

Finally we note that $\bigcap_{\{j_1,\dots,j_c\} \subseteq [\ell-1]}(L_{j_1}, \ldots, L_{j_c})$ and $\bigcap_{\{j_1,\dots,j_{c-1}\} \subseteq [\ell-1]}(L_{j_1}, \ldots, L_{j_{c-1}})$ are the ideal of the star configuration of codimension $c$ and $c-1$, respectively,  associated  
to the set of linear forms $\{L_1,\ldots, L_{\ell-1}\}.$
\end{proof}

%From the work of \cite{GHM13} we know that the ideal $I_{\mathbb{X}(\ell,c)}$ is generated in degree $\ell-c+1$ by products of the linear form in $\mathbb L$, as a consequence, we have the following lemma.

\begin{lemma}\label{var2}
Let $\mathbb L = \{L_1,\ldots,L_\ell \}\subseteq R = k[x_0,\dots,x_n]$ 
    be a set of  $\ell \geq n$ linear forms such that any subset of
    size less than or equal to $\min\{\ell,n+1\}$ is linearly independent.
    Suppose that $c+1 \leq \ell$, and for each 
    $x_i \in \{x_0,\ldots,x_n\}$, there are at least two linear forms of $\mathbb L = \{L_1,\ldots,L_\ell \}$ such that the coefficient of $x_i$ is not zero. 
    Then $I_{\mathbb{X}(\ell,c)}$ is not GVD.
 \end{lemma}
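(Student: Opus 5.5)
The plan is to combine Lemma~\ref{lemma:squarefree} with Lemma~\ref{mindeg}. Suppose, for contradiction, that $I=I_{\mathbb{X}(\ell,c)}$ is GVD. Since $I$ is not the unit ideal, and (as we check below) is not generated by variables, Definition~\ref{dfGVD} gives a variable $y=x_i$ with a geometric vertex decomposition of $I$. Then Lemma~\ref{lemma:squarefree} says the reduced Gr\"obner basis of $I$ with respect to a $y$-compatible order is squarefree in $x_i$. By Lemma~\ref{mindeg}, it is therefore enough to exhibit, for every variable $x_i$, an element of $I$ of minimal degree that is not squarefree in $x_i$.

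By Proposition~\ref{hvec}(2), the minimal degree of a nonzero element of $I$ is $d=\ell-c+1\ge 2$, and every product $L_{j_1}\cdots L_{j_d}$ of $d$ distinct forms lies in $I$. Fix $x_i$. By hypothesis there are two indices $p\neq q$ such that $x_i$ has nonzero coefficient in both $L_p$ and $L_q$. Since $d\ge2$ and $\ell\ge d$, choose a set $\sigma\subseteq[\ell]$ with $|\sigma|=d$ and $p,q\in\sigma$, and set $f=\prod_{j\in\sigma}L_j$.

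The main obstacle is that $f$ need not contain $x_i^2$ in its support: $x_i$ might be absent from the other factors, and, more seriously, the product might have other terms divisible by $x_i^2$ that cancel. I would handle this by looking at the degree of $f$ in $x_i$. Write each $L_j=a_jx_i+M_j$ with $M_j$ free of $x_i$. The coefficient of $x_i^{e}$ in $f$, where $e=\#\{j\in\sigma: a_j\neq0\}\ge 2$, is $\prod_{a_j\ne0}a_j\cdot\prod_{a_j=0}M_j$. This is nonzero, because each $M_j$ with $a_j=0$ equals $L_j\neq0$. Hence some monomial of $f$ is divisible by $x_i^e$, and in particular by $x_i^2$, so there is no cancellation problem. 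Thus $f$ is a degree-$d$ element of $I$ that is not squarefree in $x_i$, and Lemma~\ref{mindeg} shows that no Gr\"obner basis of $I$ is squarefree in $x_i$.

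It remains to exclude case (a) of Definition~\ref{dfGVD}. $I\neq(1)$ is clear. If $I$ were generated by variables, its minimal generators would have degree $1$, contradicting $d\ge 2$. Hence case (b) must hold for some $x_i$, which contradicts the previous paragraph. Therefore $I$ is not GVD.
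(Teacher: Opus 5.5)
Your proof is correct and follows essentially the same approach as the paper: Proposition~\ref{hvec}(2) provides a minimal-degree generator containing two forms that share $x_i$, and then Lemma~\ref{mindeg} and Lemma~\ref{lemma:squarefree} rule out a geometric vertex decomposition with respect to every variable. You also supply two details the paper leaves implicit: the top $x_i$-degree coefficient argument, which shows that no cancellation can remove the $x_i^2$-divisible terms, and the explicit exclusion of case (a) of Definition~\ref{dfGVD}.
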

\begin{proof}
    By Lemma \ref{lemma:squarefree}, it is enough to show that for any $y = x_i$, there does not exist a Gröbner basis that is squarefree in $y$. 

    Without loss of generality, we can assume that $y$ appears with a nonzero coefficient in $L_1$ and $L_2$. 
   
By Proposition \ref{hvec} (2),  $L_1\cdot L_2 \cdots L_{\ell-c+1}$ is a generator of $I_{\mathbb{X}(\ell,c)}$ of degree $\ell-c+1 \geq  2$ and it is not squarefree in $x_i$ since $x_i$ appears in both $L_1$ and $L_2$. 
Since  all generators have degree $\ell-c+1$, we can apply Lemma \ref{mindeg} to conclude that there does not exist a Gröbner basis squarefree in $y$. 
\end{proof}

We now come to the main result of this section, namely 
the classification of geometrically vertex decomposable ideals of star configurations.

\begin{thm}\label{maintheorem4} Let $\mathbb X(\ell,c)$ be a star 
configuration of codimension $1 \leq c \leq n$ in $\mathbb{P}^n$ 
defined by the general linear forms 
$\mathbb L = \{L_1, \ldots, L_{\ell}\}$ with $\ell\geq c$.  
\begin{enumerate}
    \item If $\ell = c$, then $I_{\mathbb{X}(\ell,c)}$ is always
    a geometrically vertex decomposable ideal.
    \item If $\ell =c+1$, then $I_{\mathbb{X}(\ell,c)}$ is a geometrically
    vertex decomposable ideal if and only if (up to reordering the variables and the linear forms in $\mathbb L$) the coefficients of the linear forms in $\mathbb L$ form a matrix that has a $(c+1)\times(c+1)$ triangular submatrix
$$
\left[ \begin{array}{ccccccc}
0 & 0 & \cdots & a_{1,c}&*& \cdots &* \\
\vdots & \vdots & \vdots & \vdots&\vdots& \vdots &\vdots  \\
0 & a_{c,1} &  \cdots & a_{c,c}  & *& \cdots &* \\
a_{c+1,0}& a_{c+1,1}& \cdots   & a_{c+1,c}&*& \cdots &* 
\end{array}
\right].$$
\item If $c+1<\ell \leq n+1$, then $I_{\mathbb{X}(\ell,c)}$ is a 
    geometrically vertex decomposable ideal if and only if 
    $I_{\mathbb{X}(\ell,\ell-1)}$ is a geometrically vertex
    decomposable ideal.   
    %the ideal of the star configuration of codimension $\ell-1$ defined by $\mathbb L$ is geometrically vertex decomposable.
   \item If $\ell > n+1$, then $I_{\mathbb{X}(\ell,c)}$ is never a
    geometrically vertex decomposable ideal. 
\end{enumerate}
\end{thm}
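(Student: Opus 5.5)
Part (4) is exactly Theorem \ref{case>n+1}, and part (1) reduces to Lemma \ref{CI}. When $\ell=c$ there is only one subset of size $c$, so $I_{\mathbb{X}(c,c)}=(L_1,\dots,L_c)$. Since $c\le n<n+1$, these forms are linearly independent by hypothesis, hence form a complete intersection of linear forms, and Lemma \ref{CI} gives (1). The real work is in (2) and (3), and the main tools are Lemma \ref{var2}, Lemma \ref{lemma:squarefree} and the decomposition of Lemma \ref{induction}.

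For (2), with $\ell=c+1\le n+1$, Proposition \ref{hvec}(2) says $I=I_{\mathbb{X}(c+1,c)}$ is minimally generated by the quadrics $L_iL_j$.

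For necessity, suppose $I$ is GVD. By Lemma \ref{lemma:squarefree} and Lemma \ref{mindeg}, there is a variable $y$ such that every quadric in $I$ is squarefree in $y$; Lemma \ref{var2} already encodes this and shows $y$ appears in exactly one form, say $L_{c+1}$. I then want to recurse. Choose a $y$-compatible lex order with $y$ largest and compute $C_{y,I}$ and $N_{y,I}$. Since $y$ occurs only in $L_{c+1}=a y+L'$, the generators $L_iL_{c+1}$ have $y$-initial form $a y L_i$ and the $L_iL_j$ with $i,j\le c$ have none. I expect $C_{y,I}=(L_1,\dots,L_c)$, a complete intersection, and $N_{y,I}$ to be the star configuration $I_{\mathbb{X}(c,c-1)}$ of $L_1,\dots,L_c$, viewed in $S$ after eliminating $y$. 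Verifying this via Lemma \ref{lemma: indep.} is a routine Gröbner computation, done by checking the $S$-pairs or the equality ${\rm in}_y(I)=C\cap(N+(y))$ by degree counting.

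With this, $I$ GVD forces $N_{y,I}$ GVD. By induction on $c$, with base case $c=1$ where $(L_1L_2)$ is GVD iff some variable appears in only one of them, which is the $2\times 2$ triangular condition, the forms $L_1,\dots,L_c$ have a $c\times c$ triangular submatrix avoiding $y$. Adding the row of $L_{c+1}$ with $y$ as the new first column gives the displayed $(c+1)\times(c+1)$ shape. Sufficiency runs the same computation backwards: the triangular shape provides $y=x_0$ appearing only in $L_{c+1}$, the decomposition above exists, $C$ is GVD by Lemma \ref{CI}, and $N$ is GVD by induction. Unmixedness holds since these ideals are radical and equidimensional.

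For (3), with $c+1<\ell\le n+1$, I would induct on $\ell-c$ using Lemma \ref{induction} and the same recursion. Again $I$ GVD requires a variable $y$ appearing in only one form, say $L_\ell$, by Lemma \ref{var2} applied to the generators of degree $\ell-c+1\ge 2$. With respect to such a $y$, I expect $C_{y,I}=I_{\mathbb{X}(\ell-1,c)}$ and $N_{y,I}=I_{\mathbb{X}(\ell-1,c-1)}$ (of $\mathbb{L}\setminus\{L_\ell\}$), matching Lemma \ref{induction}: $I=(I_{\mathbb{X}(\ell-1,c)}\cap(L_\ell))+I_{\mathbb{X}(\ell-1,c-1)}$.

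The main obstacle is the resulting equivalence. "$I_{\mathbb{X}(\ell,c)}$ GVD" should unwind, through these linked recursions down the $(\ell,c)$ grid, to the same triangular condition as "$I_{\mathbb{X}(\ell,\ell-1)}$ GVD" from (2). Concretely, I would show by induction that both conditions are equivalent to $\mathbb{L}$ admitting an ordering $L_\ell,L_{\ell-1},\dots$ and variables $y_\ell,y_{\ell-1},\dots$ such that $y_j$ appears in $L_j$ but in no $L_i$ with $i<j$, down to two forms. Checking the boundary cases where $c-1$ reaches $0$, or where $\ell-1=c$ so part (1) applies, needs care. Also, since a GVD decomposition may use any $y$, I must show every admissible $y$ leads to the same $C$ and $N$ up to this symmetry.
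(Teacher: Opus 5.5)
Your proposal is correct and follows essentially the same route as the paper: for a variable $y$ occurring only in $L_\ell$ you identify $C_{y,I}=I_{\mathbb{X}(\ell-1,c)}$ and $N_{y,I}=I_{\mathbb{X}(\ell-1,c-1)}$ via Lemma~\ref{induction}, you get necessity from Lemmas~\ref{lemma:squarefree}, \ref{mindeg} and \ref{var2}, and you run a double induction that reduces (2) and (3) to the same $c$-independent triangular condition on $\mathbb{L}$. Your final worry is unnecessary: for necessity you only need the witnessing $y$, which occurs in at most one form (if it occurs in none, the decomposition is the trivial one with $C_{y,I}=N_{y,I}=I$ and you simply drop a variable), and induction applied to $\mathbb{L}\setminus\{L_\ell\}$ then gives the condition directly, with no need to compare $C$ and $N$ across different choices of $y$.
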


\begin{proof}
Part (1) follows from Lemma~\ref{CI}, since $I_{\mathbb{X}(\ell,c)}$ is a complete intersection of linear forms when $\ell=c$.
Part (4) is proved in Theorem~\ref{case>n+1}.
We treat (2) and (3) together. We need to show that for $c+1\leq\ell \leq n+1$, the ideal $ I_{\mathbb{X}(\ell,c)}$ is geometrically vertex decomposable if and only if (up to reordering the variables and the linear forms in $\mathbb L$) the coefficients of the linear forms in $\mathbb L$ form a matrix $A$ that contains a $\ell \times \ell$ triangular submatrix
$$
\left[ \begin{array}{ccccccc}
0 & 0 & \cdots & a_{1,\ell-1}&*& \cdots &* \\
\vdots & \vdots & \vdots & \vdots&\vdots& \vdots &\vdots  \\
0 & a_{\ell-1,1} &  \cdots & a_{\ell-1,\ell-1}  & *& \cdots &* \\
a_{\ell,0}& a_{\ell,1}& \cdots   & a_{\ell,\ell-1}&*& \cdots &* 
\end{array}
\right].$$

We will proceed on double induction: 
\begin{itemize}
    \item on $\ell$, with base case $\ell=2$;
    \item and on $\ell-c$, with base case $\ell=c+1$ (that corresponds to part (2) of the statement).
\end{itemize}
Let us first consider the case $\ell=2$. 
Since $1\leq c\leq \ell-1$, we must have $c+1=\ell=2$. 
Assume $L_1=a_0x_0+\cdots +a_nx_n$ and  $L_2=b_0x_0+\cdots + b_nx_n$. Then
$$I_{\mathbb{X}(\ell,c)} = \left(\sum_{i=0}^{n} a_ix_i\right)\cap\left(\sum_{j=0}^{n} b_jx_j\right)= \left(\sum_{i=0}^{n}\sum_{j=0}^{n} a_ib_jx_ix_j\right).$$
It is enough to show that $I_{\mathbb{X}(\ell,c)}$ is 
geometrically vertex decomposable if and only if at least one of the coefficients $\{a_0,\ldots,a_n,b_0,
\ldots,b_n\}$ is zero.
It is clear that if all $a_i,b_j$ are nonzero, the generator of $I_{\mathbb{X}(\ell,c)}$ is not squarefree in any of the variables, and thus by Lemma
\ref{mindeg} and Lemma \ref{lemma:squarefree}, the ideal
is not geometrically vertex decomposable. 
Conversely, assume $a_0=0$ and set $y=x_0$. Then 
$$I_{\mathbb{X}(\ell,c)} = \left(\sum_{i=1}^{n}a_ib_0x_iy+\sum_{i=1}^{n}\sum_{j=1}^{n} a_ib_jx_ix_j\right)$$
and, for any $y$-compatible order, by Lemma \ref{lemma: indep.} we have  
$$C_{y,I}=({\rm in}_y(I) :y^{\infty})= \left(\left(\sum_{i=1}^{n}a_ib_0x_iy \right):y^{\infty}\right)=\left(\sum_{i=1}^{n}a_ib_0x_i\right), ~~\mbox{and}~~ N_{y,I}=(0).$$ 
This is a geometric vertex decomposition since 
$${\rm in}_y(I)=\left(\sum_{i=1}^{n}a_ib_0x_i\right) \cap (y)= C_{y,I}\cap (N_{y,I} + (y)).$$
The ideal $C_{y,I}$ is then geometrically vertex
decomposable by \cite[Lemma 2.6]{CDSRVT}.
We conclude that 
our base case $I_{\mathbb{X}(2,1)}$ is geometrically
vertex decomposable.

Let us now assume $\ell > 2$. 
First, by Lemma \ref{var2}, if every variable appears in at least 2 linear forms, then $I_{\mathbb{X}(\ell,c)}$ is not geometrically vertex decomposable. 
In this case, the matrix of coefficients cannot contain a triangular submatrix.

Therefore, we can assume that the variable $y=x_0$ only appears in the linear form $L_\ell$, i.e., $y$ has a nonzero
coefficient only in $L_\ell$.
By Lemma \ref{induction} we have that $$I=I_{\mathbb{X}(\ell,c)} = \left(I_{\mathbb{X}(\ell-1,c)}\cap(L_\ell)\right)+ I_{\mathbb{X}(\ell-1,c-1)}.$$ Since the variable $y=x_0$ appears only in $L_\ell$, we can consider $I_{\mathbb{X}(\ell-1,c)}$ and $I_{\mathbb{X}(\ell-1,c-1)}$ as ideals of star configurations in $\mathbb P^{n-1}$ with respect to the set of linear forms $\{L_1,\ldots, L_{\ell-1}\}.$

For a $y$-compatible monomial order, we have that
$${\rm in}_y(I)= \left(I_{\mathbb{X}(\ell-1,c)}\cap(y)\right)+ I_{\mathbb{X}(\ell-1,c-1)}$$
and so by Lemma \ref{lemma: indep.} we have
$$C_{y,I}=({\rm in}_y(I) :y^{\infty})=I_{\mathbb{X}(\ell-1,c-1)} + I_{\mathbb{X}(\ell-1,c)}=I_{\mathbb{X}(\ell-1,c)}.$$
Moreover, 
$$N_{y,I}+(y)= {\rm in}_y(I) +(y)= (y)+ I_{\mathbb{X}(\ell-1,c-1)},$$
and since $y$ does not appear in the generators of $N_{y,I}$ we conclude that
$$N_{y,I}=I_{\mathbb{X}(\ell-1,c-1)}.$$
This gives a geometric vertex decomposition of $I_{\mathbb{X}(\ell,c)}$; in fact
$${\rm in}_y(I) = [(y)\cap C_{y,I}] + N_{y,I} =
C_{y,I} \cap [(y)+N_{y,I}] = I_{\mathbb{X}(\ell-1,c)}
\cap [(y) + I_{\mathbb{X}(\ell-1,c-1)}].$$

We now proceed by induction on $\ell-c$.  
First, consider the base case $\ell=c+1$. We use induction again, this time on  $\ell$. We have already proved the statement for $\ell=2$, so we can assume $\ell>2$.  Observe that
$N_{y,I}=I_{\mathbb{X}(\ell-1,c-1)}$ is a star configuration of codimension $c-1$ defined by a set of $\ell=c+1$ lines. By case (1) of this theorem, it is always geometrically vertex decomposable.
For $C_{y,I}=I_{\mathbb{X}(\ell-1,c)},$ since $(c-1)+1=\ell-1$ we can apply the inductive hypothesis to conclude that $C_{y,I}$ is geometrically vertex
decomposable if and only if the coefficient matrix $A'$ has a $(\ell-1)\times (\ell-1)$ triangular submatrix.
Note that $A'$ is obtained from $A$ by deleting the last row and the first column. Hence $I_{\mathbb{X}(\ell,c)}$ is geometrically vertex decomposable if and only if $A$ contains a triangular $\ell\times \ell$ submatrix.

This concludes the base case. For any $c+1<\ell \leq n+1$, we can use the inductive hypothesis (on $\ell-c$ and on $\ell$)  to conclude that $N_{y,I}=I_{\mathbb{X}(\ell-1,c-1)}$ and $C_{y,I}=I_{\mathbb{X}(\ell-1,c)}$ are 
geometrically vertex decomposable if and only if their matrices of coefficient $A'$ have a $(\ell-1)\times (\ell-1)$  triangular submatrix. Since $A'$ is obtained from $A$ by deleting the last row and the first column, and we already know that the first column contains at most one nonzero entry, we conclude that  $I_{\mathbb{X}(\ell,c)}$ is
geometrically vertex decomposable if and only if $A$ has a triangular $\ell\times \ell$ submatrix.
\end{proof}

%%%%%%%%%%%%%%%%%%%%%%%%%%%%%%%%%%%%%%%%%%%%%%

\section{Star configurations and Knutson ideals}\label{sec.knutson}

In his work on Frobenius splittings, Knutson
\cite{K2009} introduced a family of ideals
that can be constructed from a polynomial whose
leading term is a squarefree monomial in some 
monomial order.  These ideals were later 
called Knutson ideals by Conca-Varbarro \cite{CV2020}.
As recently shown in \cite{DGKRS}, there appears 
to be a relationship
between Frobenius splittings, Knutson ideals, and 
geometrically vertex decomposable ideals.
Ideals of star configurations give another example
of this connection.  
In fact, for ideals of star configurations, being a Knutson
ideal is the exact same as being a geometrically 
vertex decomposable ideal.

We recall the definition of Knutson ideals
as found in \cite{CV2020}.

\begin{df}\label{kn}
    Let $f\in R = k[x_0,\dots, x_n]$ be a polynomial such that its leading term ${\rm in}_<(f)$ is squarefree with respect to a fixed monomial order $<$. Define $C_f$ to be the smallest set of ideals satisfying the following conditions:
    \begin{enumerate}[label=\Roman*.]
        \item $(f)\in C_f$;
        \item if $I\in C_f$, then $I:J\in C_f$ for every ideal $J\subseteq R$;
        \item if $I, J \in C_f$, then $I+J \in C_f;$ and
        \item if $I, J \in C_f$, then $I\cap J \in C_f.$ 
    \end{enumerate}
    An ideal $I\in C_f$ is called a {\it Knutson ideal associated to $f$.}
\end{df}

Knutson ideals satisfy the property that their initial ideals
are squarefree, as given in the next result.

\begin{thm}\cite[Main Theorem 1.]{S2021}\label{initial ideal}
Let $f \in R = k[x_0,\ldots,x_n]$ be a polynomial
such that ${\rm in}_<(f)$ is squarefree with respect
to a monomial order $<$.  If $I \in C_f$, then the initial
ideal ${\rm in}_<(I)$ is a squarefree monomial ideal.
\end{thm}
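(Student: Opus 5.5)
The plan is to follow Knutson's Frobenius-splitting argument: first prove the statement when $k$ has characteristic $p>0$, then pass to characteristic $0$ by reduction modulo $p$. Since a monomial ideal is radical exactly when it is squarefree, it suffices to show that ${\rm in}_<(I)$ is radical.

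\emph{Step 1 (splitting in characteristic $p$).} Assume $\operatorname{char} k = p$ and take $k$ perfect, which we may do since Gröbner bases are unchanged by field extension. Let ${\rm Tr}: R\to R$ be the standard $p^{-1}$-linear trace map. On monomials it sends $x^{a}$ to $x^{(a-(p-1)\mathbf 1)/p}$ when every exponent $a_i\equiv p-1 \pmod p$, and to $0$ otherwise. Write $m={\rm in}_<(f)$ and $x_{\mathbf 1}=x_0\cdots x_n$. Since $m$ is squarefree, the element $g=f^{p-1}\,(x_{\mathbf 1}/m)^{p-1}$ has leading term $x_{\mathbf 1}^{p-1}$. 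I will check that $\varphi={\rm Tr}(g\,\cdot)$ is a Frobenius splitting, i.e.\ that $\varphi(1)=1$. The point is that among the monomials of $g$, only $x_{\mathbf 1}^{p-1}$ survives ${\rm Tr}$ in degree $0$; any other surviving term would have to be divisible by $x_{\mathbf 1}^{p-1}$ and hence exceed it in the order. Normalizing the leading coefficient then gives $\varphi(1)=1$.

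\emph{Step 2 (the class $C_f$ is compatibly split).} The ideal $(f)$ is compatibly split by $\varphi$, because $g\in(f)$ forces $\varphi((f))\subseteq (f)$. Next I use the standard facts that compatibly split ideals are closed under colon by arbitrary ideals, sums, and intersections. Sums and intersections are immediate. For colons: if $u\,J\subseteq I$, then $\varphi(u^p v)=u\,\varphi(v)$ shows $\varphi(I:J)\subseteq I:J$. Hence every $I\in C_f$ is $\varphi$-compatible, and in particular radical.

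\emph{Step 3 (degeneration), the main obstacle.} I need a splitting that compatibly splits ${\rm in}_<(I)$. The first thing to try is the following. Choose a weight $w$ with ${\rm in}_w(I)={\rm in}_<(I)$ and ${\rm in}_w(g)=x_{\mathbf 1}^{p-1}$. Consider the homogenized family $\tilde I\subseteq R[t]$ together with the $t$-homogenized splitting $\tilde\varphi={\rm Tr}(\tilde g\,\cdot)$ on $R[t]$, where $\tilde g$ also carries a factor $t^{p-1}$. Then verify three things: that $\tilde\varphi$ is a splitting; that $\tilde I$ and $(t)$ are compatibly split; and, consequently, that $\tilde I+(t)$ is compatibly split. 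Setting $t=0$ then shows that ${\rm in}_w(I)$ is compatible with ${\rm Tr}(x_{\mathbf 1}^{p-1}\,\cdot)$, the standard splitting, whose compatibly split ideals are exactly the squarefree monomial ideals. The delicate part is confirming that the $t$-adic bookkeeping keeps $\tilde\varphi$ a genuine splitting. I would rely on the leading-term computation from Step 1, applied weightwise.

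\emph{Step 4 (characteristic $0$).} Every $I\in C_f$ is obtained from $(f)$ by finitely many colons, sums, and intersections. All of these, together with the reduced Gröbner basis of $I$, are defined over a finitely generated $\mathbb Z$-algebra $A\subseteq k$. By generic freeness, for all maximal ideals $\mathfrak m$ of $A$ in a dense open set, the following hold at once: the constructions commute with reduction modulo $\mathfrak m$; ${\rm in}(f)$ stays squarefree; and ${\rm in}_<(I)$ reduces to ${\rm in}_<(I_{\mathfrak m})$. The last two steps then give the squarefree property in characteristic $0$. The residual difficulty here is only to make sure that colon ideals by arbitrary $J$ specialize correctly, which again follows from generic flatness of the relevant finitely many modules.
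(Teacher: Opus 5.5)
The paper gives no proof of this statement; it quotes it from Seccia. Your outline follows the standard argument behind that citation, and it is sound:
\begin{enumerate}
\item Knutson's characteristic-$p$ proof: the splitting $\mathrm{Tr}(g\,\cdot)$ with $g=(f\,x_{\mathbf 1}/m)^{p-1}$, stability of compatibly split ideals under colons, sums and intersections, and Gr\"obner degeneration of the splitting along a weight.
\item Passage to characteristic $0$ by reduction modulo $p$.
\end{enumerate}
Two points need repair.

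\textbf{Step 2.} The fact $g\in(f)$ is not what makes $(f)$ compatible. For example, with $p=3$ on $k[x]$, $g=x^2+x\in(x)$ gives a splitting $\mathrm{Tr}(g\,\cdot)$, yet $\mathrm{Tr}(g\cdot x)=\mathrm{Tr}(x^3+x^2)=1\notin(x)$. What you actually use is $g\in(f^{p-1})$, so that $\varphi(fh)=\mathrm{Tr}\bigl(f^p(x_{\mathbf 1}/m)^{p-1}h\bigr)=f\,\mathrm{Tr}\bigl((x_{\mathbf 1}/m)^{p-1}h\bigr)$. Your colon computation also has the roles of $u$ and $v$ swapped. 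It should read: for $v\in I:J$ and $u\in J$, one has $u\,\varphi(v)=\varphi(u^pv)\in\varphi(I)\subseteq I$.

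\textbf{Step 3.} You call $\tilde\varphi(1)=1$ the delicate part, but it is immediate from the Step 1 argument. The step that genuinely needs an argument is $\tilde\varphi(\tilde I)\subseteq\tilde I$, which you list but do not justify. It follows from torus equivariance of the trace:
\begin{itemize}
\item On $R[t^{\pm1}]$ let $\Phi(x_i)=t^{-w_i}x_i$ and $|w|=\sum_i w_i$. One checks on monomials that $\mathrm{Tr}\circ\Phi=\Phi\circ\mathrm{Tr}\circ(t^{-(p-1)|w|}\,\cdot)$.
\item Hence $\Phi\circ\mathrm{Tr}(t^{p-1}g\,\cdot)\circ\Phi^{-1}=\mathrm{Tr}(t^{p-1}\tilde g\,\cdot)$, where $\tilde g=t^{(p-1)|w|}\Phi(g)$. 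This $\tilde g$ is exactly the weight homogenization of $g$, since $(p-1)|w|$ is the top $w$-weight of $g$.
\item The ideal $IR[t^{\pm1}]$ is compatible with $\mathrm{Tr}(t^{p-1}g\,\cdot)$, because the trace on $R[t^{\pm1}]$ factors as a tensor product of traces. Since $\tilde I=\Phi(IR[t^{\pm1}])\cap R[t]$, compatibility of $\tilde I$ follows.
\item Modulo $t$ the splitting becomes $\mathrm{Tr}(\mathrm{in}_w(g)\,\cdot)$. Terms of $t^{p-1}\tilde g$ with $t$-exponent $p-1+j$ and $0<j<p$ are killed by $\mathrm{Tr}$, and those with $j\ge p$ map into $(t)$.
\end{itemize}
With these filled in, Steps 3 and 4 go through as you describe.
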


We can apply the previous result to show that
ideals of star configurations cannot be Knutson ideals
if $\ell$ is ``too large''.

\begin{thm}\label{kn l>n+1}
Let $\mathbb X(\ell,c)$ be a star 
configuration of codimension $1 \leq c \leq n$ in $\mathbb{P}^n$
defined by the general linear forms 
$\mathbb L = \{L_1, \ldots, L_{\ell}\}$ with $\ell\geq c$.
If $\ell>n+1$, then $I_{\mathbb{X}(\ell,c)}$ is not a Knutson ideal.
\end{thm}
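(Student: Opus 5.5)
We argue by contradiction, using Theorem~\ref{initial ideal} together with Lemma~\ref{mindeg}. Suppose $I=I_{\mathbb{X}(\ell,c)}\in C_f$ for some $f$ whose leading term ${\rm in}_<(f)$ is squarefree with respect to a monomial order $<$. Then by Theorem~\ref{initial ideal}, ${\rm in}_<(I)$ is a squarefree monomial ideal. The plan is to exhibit, for any monomial order, a minimal generator of ${\rm in}_<(I)$ that is not squarefree.

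The key observation is that $\ell>n+1$ forces many linear forms to involve each variable. Fix any variable $x_i$. We claim at most $n$ of the $L_j$ have zero coefficient on $x_i$. Indeed, such forms lie in the $n$-dimensional space spanned by $\{x_0,\ldots,\widehat{x_i},\ldots,x_n\}$. Any $n+1$ of them would be linearly dependent, contradicting the hypothesis that every subset of size $\min\{\ell,n+1\}=n+1$ is linearly independent. Hence at least $\ell-n\ge 2$ forms involve $x_i$.

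Now let $x_m$ be the $<$-largest variable. Since the $L_j$ span $R_1$, we may use the generators of Proposition~\ref{hvec}(2), which all have degree $d=\ell-c+1\ge 2$ (as $\ell>n+1>c$). Choose $d$ forms including two forms $L_a,L_b$ that involve $x_m$. This is possible because at least two forms involve $x_m$ and $d\ge 2$. The leading monomial of any linear form involving $x_m$ is $x_m$, and leading terms are multiplicative. Therefore the product $g$ of the chosen forms has ${\rm in}_<(g)$ divisible by $x_m^2$.

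We then need ${\rm in}_<(I)$ to be non-squarefree. Since $g\in I$ has minimal degree $d$, the monomial ${\rm in}_<(g)$ lies in ${\rm in}_<(I)$. A squarefree monomial ideal generated in degrees $\ge d$ whose degree-$d$ part contains $x_m^2 u$ must contain a squarefree divisor of $x_m^2u$ of degree $\le d$. Because $I$ has no elements of degree $<d$, that divisor would have degree exactly $d$, which is impossible since a proper divisor has smaller degree. Therefore ${\rm in}_<(g)$ is itself a minimal generator of ${\rm in}_<(I)$, and it is not squarefree. This contradicts Theorem~\ref{initial ideal}.

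The step needing the most care is this last one: ensuring that the non-squarefree monomial is genuinely a minimal generator of ${\rm in}_<(I)$, and not merely an element of it. The degree argument above handles this, since ${\rm in}_<(I)$ has no monomials of degree below $d$. Alternatively, one could simply note that a squarefree monomial ideal with generators of degree $\ge d$ contains no non-squarefree monomial of degree $d$.
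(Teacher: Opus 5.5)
Your proof is correct and takes essentially the same route as the paper. For an arbitrary monomial order, you exhibit a degree-$(\ell-c+1)$ generator $L_aL_b\cdots$ whose leading term is divisible by the square of a variable, note that it is a minimal generator of ${\rm in}_<(I)$ because $I$ has nothing in lower degree, and contradict Theorem~\ref{initial ideal}. The only difference is how you find two forms with a common leading variable: you use the linear-independence hypothesis to show that at least $\ell-n\geq 2$ forms involve the $<$-largest variable, whereas the paper applies the pigeonhole principle to the $\ell>n+1$ leading variables, which is shorter and does not need that hypothesis.
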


\begin{proof}
Fix a monomial order $<$.  Since $\deg(L_i)=1$ for all
$i=1,\ldots,\ell$, we have ${\rm in}_<(L_i) =x_{j_i}$ 
for some variable $x_{j_i} \in \{x_0,\ldots,x_n\}$.  Because
$\ell >n+1$, there exist as least two $L_i,L_j \in \mathbb{L}$
with $L_i \neq L_j$, but ${\rm in}_<(L_i) = {\rm in}_<(L_j)$.

By Proposition \ref{hvec}, the ideal $I = I_{\mathbb{X}(\ell,c)}$
is generated by all elements of the form
$$L_{i_1}L_{i_2}\cdots L_{i_{\ell-c+1}} ~~\mbox{with 
$\{i_1,\ldots,i_{\ell-c+1}\} \subseteq [l]$.} $$
Since $c \leq n$, we have $\ell-c+1 \geq \ell -n+1 > \ell -n-1 >0$,
and hence $\ell-c+1 \geq 2$.  Consequently, there is 
a generator of $I_{\mathbb{X}(\ell,c)}$ of the form
$$G = L_iL_jL_{i_{3}}\cdots L_{i_{\ell-c+1}} ~~\mbox{with 
$\{i_3,\ldots,i_{\ell-c+1}\} \subseteq  [\ell] \setminus \{i,j\}$.}
$$
Thus ${\rm in}_<(G) = {\rm in}_<(L_i){\rm in}_<(L_j) 
{\rm in}_<(L_{i_3})\cdots {\rm in}_<(L_{i_{\ell-c+1}})$ 
is not a squarefree monomial
since ${\rm in}_<(L_i) = {\rm in}_<(L_j)$. 
Furthermore ${\rm in}_<(G)$ must be a minimal generator
of ${\rm in}_<(I)$.  To see why, suppose there was 
a squarefree monomial generator ${\rm in}_<(I)$ that
divides ${\rm in}_<(G)$.  Its degree must be strictly less
than $\ell-c+1$.  But since $I$ is generated
by homogeneous elements of  degree $\ell-c+1$
by Proposition \ref{hvec}, the initial ideal
cannot have any monomials of degree strictly less
than $\ell-c+1$. 

Theorem \ref{initial ideal} implies that if an ideal
is Knutson, there is at least one monomial order 
such that the initial ideal is squarefree.  However, we have 
shown that for any monomial order $<$, the initial ideal of
$I_{\mathbb{X}(\ell,c)}$ is not squarefree.
So $I_{\mathbb{X}(\ell,c)}$ 
cannot be a Knutson ideal.
\end{proof}

We now come to the main theorem of this section.

\begin{thm}\label{mainthm2}
    The ideal $I_{\mathbb{X}(\ell,c)}$ of a star configuration of codimension $c$ in $\mathbb{P}^n$     
    %in $I_{\mathbb{X}%(\ell,c)}\subseteq\mathbb{P}^n$
    is geometrically vertex decomposable if and only if it is a Knutson ideal.
\end{thm}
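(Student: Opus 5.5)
We will prove the two implications separately, organizing the argument along the cases of Theorem~\ref{maintheorem4}.

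\emph{Knutson implies GVD.} Suppose $I=I_{\mathbb{X}(\ell,c)}$ is Knutson. By Theorem~\ref{kn l>n+1} we have $\ell\le n+1$, and if $\ell=c$ the ideal is GVD by Theorem~\ref{maintheorem4}(1). So assume $c+1\le \ell\le n+1$. We must show the coefficient matrix has the triangular shape of Theorem~\ref{maintheorem4}. We adapt the argument of Theorem~\ref{kn l>n+1}. By Theorem~\ref{initial ideal} there is an order $<$ with ${\rm in}_<(I)$ squarefree. Every product $G$ of $\ell-c+1\ge 2$ distinct $L_i$'s is a minimal-degree element of $I$, so ${\rm in}_<(G)=\prod {\rm in}_<(L_i)$ is a minimal generator of ${\rm in}_<(I)$. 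Squarefreeness of all these forces the leading variables ${\rm in}_<(L_1),\dots,{\rm in}_<(L_\ell)$ to be pairwise distinct. Order the forms so that ${\rm in}_<(L_\ell)>\cdots>{\rm in}_<(L_1)$, and call these variables $x_0,\dots,x_{\ell-1}$.

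This alone does not give triangularity, since $x_0$ could appear in smaller forms. So we will try a refined argument. Consider the greatest variable $y$ among these leading variables. The order restricted to forms involving $y$ is $y$-compatible after replacing $<$ by lex with $y$ first. Then we want to show that $y$ occurs only in one $L_i$. Otherwise, $y$ appears in two forms $L_i,L_j$, and the product $G=L_iL_j\cdots$ has $y^2$ in its support. The difficulty is converting this into a non-squarefree leading term. We expect this to be the main obstacle: ${\rm in}_<$ of a product is the product of leading terms, and $y$ need not lead in $L_i$.

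Our first attempt is to pass to a Knutson-compatible reduction. Knutson classes are closed under colon and sum, and $C_f$ is stable under these operations, so $I:L_\ell=I_{\mathbb{X}(\ell-1,c)}$ and $I+(\text{suitable})$ remain in $C_f$; note that $I:L_\ell$ is computable from Lemma~\ref{induction}. We then induct on $\ell$ using the recursion of Theorem~\ref{maintheorem4}, which reduces everything to $C_{y,I}=I_{\mathbb{X}(\ell-1,c)}$ and $N_{y,I}=I_{\mathbb{X}(\ell-1,c-1)}$. If this fails, we fall back on the Gr\"obner-degeneration characterization. A squarefree initial ideal of $I$ that is also Cohen--Macaulay must have the Stanley--Reisner complex with $h$-vector $(1,c,\dots,\binom{\ell-1}{c-1})$. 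Combined with the distinct leading variables, and after a triangular change of basis that uses the Gr\"obner basis $\{L_i\}$ reduced, this should force the coefficient matrix into triangular form.

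\emph{GVD implies Knutson.} By Theorem~\ref{maintheorem4}, GVD forces $\ell\le n+1$ and, after reordering, the triangular shape. Take lex order with $x_0>x_1>\cdots$ and arrange that ${\rm in}_<(L_i)=x_{\ell-i}$; the triangular shape makes these distinct. Set $f=L_1L_2\cdots L_\ell$, so that ${\rm in}_<(f)=x_0x_1\cdots x_{\ell-1}$ is squarefree. Then $(L_i)=(f):(f/L_i)\in C_f$. Each $(L_{j_1},\dots,L_{j_c})$ is a sum of these, so it lies in $C_f$. Finally $I_{\mathbb{X}(\ell,c)}$ is an intersection of such ideals, hence it is Knutson. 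If $\ell=c$, the ideal is again a sum of principal ideals $(L_i)$ and the same argument applies.
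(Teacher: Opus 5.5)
\textbf{GVD $\Rightarrow$ Knutson.} This direction is essentially the paper's argument: take $f=L_1\cdots L_\ell$, get $(L_i)=(f):(f/L_i)$, then use sums and intersections. Your choice $f=L_1\cdots L_\ell$ is the right one.

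There is one slip. When $\ell=c$, being GVD gives no triangular shape, since Theorem~\ref{maintheorem4}(1) holds for every $\mathbb{L}$. So ``the same argument'' applied to the original forms can fail. For example, if two of the $L_i$ involve every variable, they share the same leading variable in every monomial order. You must first replace $L_1,\dots,L_c$ by a row-reduced basis of their span. This does not change $I=(L_1,\dots,L_c)$, and it is exactly what the paper does via Lemma~\ref{CI}.

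\textbf{Knutson $\Rightarrow$ GVD: this is where the gap is.} You correctly obtain an order $<$ for which ${\rm in}_<(L_1),\dots,{\rm in}_<(L_\ell)$ are pairwise distinct. You then assert that this does not yield triangularity, and leave the step to speculative fallbacks.

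The obstacle you describe does not exist. For a linear form $L$, ${\rm in}_<(L)$ is simply the $<$-largest variable occurring in $L$. Suppose $y_1<y_2<\cdots<y_\ell$ with $y_i={\rm in}_<(L_i)$. Then $L_i$ has nonzero coefficient on $y_i$ and zero coefficient on every $y_j$ with $j>i$. Hence the submatrix with rows $L_1,\dots,L_\ell$ and columns $y_\ell,y_{\ell-1},\dots,y_1$ is precisely the triangular matrix of Theorem~\ref{maintheorem4}.

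In the terms of your own attempt: suppose the largest leading variable $y={\rm in}_<(L_\ell)$ occurred in some $L_i$ with $i\neq\ell$. Then ${\rm in}_<(L_i)\geq y$, so ${\rm in}_<(L_i)=y$, contradicting distinctness. Thus $y$ occurs only in $L_\ell$, and you never need to turn a $y^2$ in the support of a product into a leading term.

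With this observation, Theorem~\ref{maintheorem4}(2)--(3) gives GVD. This is exactly the paper's converse, stated contrapositively.

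Without it, your proposal does not prove this direction. The colon identity $I:L_\ell=I_{\mathbb{X}(\ell-1,c)}$ is correct. However, it feeds into the recursion of Theorem~\ref{maintheorem4} only once you already know that some variable occurs in a single form, which is what you were trying to establish. The $h$-vector/Stanley--Reisner route names no mechanism that would force the triangular shape.
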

\begin{proof}
    Let $\mathbb X(\ell,c)$ be a star configuration of codimension $c$ in $\mathbb{P}^n$ defined by the general linear forms $\mathbb L = \{L_1, \ldots, L_{\ell}\}$ with $\ell\geq c$.  Let $I=I_{\mathbb{X}(\ell,c)}$ be the associated ideal.
    By Theorem \ref{maintheorem4} we want to show that $I$ 
    is a Knutson ideal if and only if 
    \begin{enumerate}
        \item $c=\ell$, or
        \item $c+1\leq\ell\leq n+1$ and the coefficients of the linear forms in $\mathbb L$ (up to reordering the variables and the linear forms) form a matrix $A$ that contains an $\ell \times \ell$ triangular submatrix
$$
\left[ \begin{array}{ccccccc}
0 & 0 & \cdots & a_{1,\ell-1}&*& \cdots &* \\
\vdots & \vdots & \vdots & \vdots&\vdots& \vdots &\vdots  \\
0 & a_{\ell-1,1} &  \cdots & a_{\ell-1,\ell-1}  & *& \cdots &* \\
a_{\ell,0}& a_{\ell,1}& \cdots   & a_{\ell,\ell-1}&*& \cdots &* 
\end{array}
\right].$$
    \end{enumerate}
Note that we already know by %Lemma
Theorem \ref{kn l>n+1} that when $\ell>n+1$, the ideal $I$ is not a Knutson ideal, so we can restrict to the cases $c\leq \ell\leq n+1$.

First, let us consider the case $c=\ell$. In this case, proceeding as in Lemma~\ref{CI}, we can find new linear forms $L'_1,\dots,L'_c$ such that $I=(L'_1,\dots,L'_c)$ and the coefficients are in a triangular matrix. Then $f=L'_1L'_2\cdots L'_c$ has 
squarefree leading term ${\rm in}_<(f)$ when an appropriate monomial
order is chosen.
Now, by Definition \ref{kn} I. and II., 
for any $i=1,\dots, c$ we have
$$(L'_i)=(f):(L'_1\cdots L'_{i-1}L'_{i+1}\cdots L'_c)\in C_f.$$
Using part III. of Definition \ref{kn} we conclude that
$$I=(L'_1,\dots,L'_c)=(L'_1)+\dots+(L'_c)\in C_f.$$  
Therefore $I$ is a Knutson ideal associated to $f$.

Let us now assume $c+1\leq\ell\leq n+1$. If the coefficients of the linear forms $L_1,\dots, L_\ell$ form a matrix that contains an $\ell \times \ell$ triangular submatrix, then the proof is similar to what was done in the case $\ell=c$. In fact, for an appropriate choice of monomial order $<$, 
$f=L_1L_2\cdots L_c$ has squarefree leading term and 
the following ideals are all Knutson ideals associated
to $f$:
\begin{align*}
   (L_i)=(f):(L_1\cdots L_{i-1}L_{i+1}\cdots L_\ell)\in C_f \ &\text{for any $i\in [\ell]$, by  Definition \ref{kn} I. and II.} \\
   (L_{j_1}, \dots, L_{j_c})=  (L_{j_1})+ \dots +(L_{j_c}) \in C_f \ &\text{for any $\{j_1,\dots,j_c\}\subseteq [\ell]$ by Definition \ref{kn} III.}\\
   I=\bigcap_{\{j_1,\dots,j_c\} \subseteq [\ell]}(L_{j_1}, \ldots, L_{j_c})\in C_f  \ &\text{by Definition \ref{kn} IV.}
\end{align*}
Therefore, we conclude that in this case $I$ is a Knutson ideal. 

Conversely, assume that the matrix of coefficients does not contain an $\ell \times \ell$ triangular submatrix. This implies that for any monomial order $<$, there exist two linear forms $L_t$ and $L_j$ such that ${\rm in}_<(L_t)={\rm in}_<(L_j)=x_i$. Then proceeding in a similar way as  in Theorem \ref{kn l>n+1} we obtain an element of $I$ of degree $\ell-c+1$ whose initial term is not squarefree in $x_i$. By the minimality of the degree $\ell-c+1$, we conclude that 
${\rm in}_<(I)$ is not squarefree.  Hence $I$ is not a  Knutson ideal by Theorem \ref{initial ideal}.
\end{proof}

\begin{rmk}\label{gvd+knutson}
 The connection between geometrically vertex decomposable
 ideals and Knutson ideals is further studied in
 \cite{DGKRS}.  In general, there are
 geometrically vertex decomposable ideals that are 
 not Knutson; for one example, see \cite[Example 3.3]{DGKRS}.
 Similarly, there are Knutson ideals that are 
 geometrically vertex decomposable.  One such
 example is \cite[Example 3.14]{CV2020}, which is
 an example of a Knutson ideal that is not Cohen-Macaulay. 
 Since all geometrically vertex decomposable ideals are 
 Cohen-Macaulay, this is an example of a Knutson ideal that 
 cannot be geometrically vertex decomposable.
\end{rmk}

%%%%%%%%%%%%%%%%%%%%%%%%%%%%%%

\section{Future directions}\label{sec.future}

The star configurations studied in this paper are 
sometimes called {\it linear} star configurations
since the variety is constructed from a set $\mathbb{L} =\{L_1,\ldots,
L_\ell\}$ of linear forms.
A generalization of star configurations, 
which relaxes the linear  condition,
was introduced by Geramita, Harbourne,
Migliore, and Nagel
\cite{GHMN2017}.  We recall the definition here:

\begin{df}\label{defn:genstarconfig}
Let $\mathbb{F} = \{F_1,\ldots,F_\ell\}$
be homogeneous forms of $R$ with 
$\deg(F_i) = d_i$ for $i=1,\ldots,\ell$.  
Fix a positive integer $1 \leq c\leq n$,
and assume that for every subset of $c+1$ elements
of $\mathbb{F}$, the ideal generated by
those elements has codimension $c+1$.  
We define the ideal
$$I_{\mathbb{Z}(\ell,c)} = 
\bigcap_{\sigma = \{j_1,\ldots,j_{c}\} \subseteq [\ell]} \left( F_{j_{1}},\ldots,F_{j_{c}} \right),$$
where $\sigma $ is a subset of $ [\ell]=\{1,\dots,\ell\}$.
The variety $\mathbb{Z}(\ell, c)$ defined by $I_{\mathbb{Z}(\ell,c)}$ is called
a {\it general star configuration} in $\mathbb{P}^n$ of codimension 
$c$ associated to $\mathbb F$.
\end{df}

We recover Definition \ref{defn:starconfig}
when all the $F_i$'s in the previous definition 
are linear forms.
In light of the goal of this paper, it is natural
to ask:

\begin{question}\label{openquestion}
    Can we classify the ideals of general star configurations
    that are geometrically vertex decomposable and/or
    Knutson?
\end{question}

Based upon our earlier results,
the answer to this question  could be quite subtle since 
not only will the coefficients be relevant, but 
we also suspect that the degrees of the $F_i$'s will play a role.  
Fleshing out these details is the basis for a  future project.

The following two examples show that
there exist general star configurations 
whose ideals are geometrically vertex decomposable and Knutson.

\begin{ex}

Let $R=k[x_0,\dots,x_5]$, and $\mathbb{Z}(3,2) \subseteq \mathbb{P}^5$ be a 
  general star configuration of codimension $2$ associated to
  $\mathbb F = \{F_1,F_2,F_3\}$,
  \noindent where $F_1=x_0x_1, F_2=x_2x_3$ and $F_3=x_4x_5$ are squarefree monomials.  The general star configuration ideal is
\[
I_{\mathbb{Z}(3,2)} = \bigcap_{1 \le i < j \le 3} (F_i, F_j)
= (F_1,F_2)\cap(F_1,F_3)\cap(F_2,F_3).
\]
The ideal $I=I_{\mathbb{Z}(3,2) }$ is a monomial
ideal minimally generated by the following monomials
\[
I = (
x_0x_1x_2x_3,\;
x_0x_1x_4x_5,\;
x_2x_3x_4x_5
).
\]
Using {\it Macaulay2}, we can verify that this ideal
is geometrically vertex decomposable.  An alternative
way to verify this fact is to show that the simplicial
complex associated to $I$ via the Stanley-Reisner correspondence is a vertex decomposable simplicial complex. 
One can then use
\cite[Proposition 2.9]{KR21} to show that $I$ is a 
geometrically vertex decomposable ideal.

This ideal is also Knutson since every squarefree
monomial ideal is Knutson.
\end{ex}

We now give another non-monomial example of a
generalized star configuration that is geometrically
vertex decomposable.

\begin{ex}
Let $R = k[x_0, x_1, \ldots, x_{11}]$ be the coordinate ring of $\mathbb{P}^{11}$, 
and consider the general star configuration $\mathbb{Z}(3,2)$ associated to
$$\mathbb{F} = \{F_1, F_2, F_3\} = 
\{(x_0+x_1)(x_2+x_3), \, (x_4+x_5)(x_6+x_7), \, (x_8+x_9)(x_{10}+x_{11})\}.$$
Note that $F_1, F_2, F_3$ are non-linear, 
non-monomial forms involving pairwise disjoint sets of variables.
By Proposition~\ref{gens}, the minimal generators of $I_{\mathbb{Z}(3,2)}$ are 
the products of $\ell - c + 1 = 2$ forms of type:
\begin{align*}
F_1 F_2 &= (x_0+x_1)(x_2+x_3)(x_4+x_5)(x_6+x_7), \\
F_1 F_3 &= (x_0+x_1)(x_2+x_3)(x_8+x_9)(x_{10}+x_{11}), \\
F_2 F_3 &= (x_4+x_5)(x_6+x_7)(x_8+x_9)(x_{10}+x_{11}).
\end{align*}
Each $F_iF_j$ is a polynomial whose terms are all
squarefree.
Furthermore, using the lexicographical monomial order
with $x_0 > x_1 > \cdots > x_{11}$, these 
generators form a Gr\"obner basis of the
ideal $I$.

We now verify that $I = I_{\mathbb{Z}(3,2)}$ is geometrically vertex decomposable. 
If we set $y = x_0$, then the above lexicographical order $<$ 
is $y$-compatible. 
Rewriting each generator as $x_0^{\delta_i} q_i + r_i$, we obtain:
\begin{align*}
F_1 F_2 &= x_0 \cdot (x_2+x_3)(x_4+x_5)(x_6+x_7) 
          + x_1(x_2+x_3)(x_4+x_5)(x_6+x_7), \\
F_1 F_3 &= x_0 \cdot (x_2+x_3)(x_8+x_9)(x_{10}+x_{11}) 
          + x_1(x_2+x_3)(x_8+x_9)(x_{10}+x_{11}), \\
F_2 F_3 &= x_0^0 \cdot (x_4+x_5)(x_6+x_7)(x_8+x_9)(x_{10}+x_{11}).
\end{align*}
By Definition~\ref{dfGVD}, this gives:
\begin{align*}
C_{y,I} &= \big((x_2+x_3)(x_4+x_5)(x_6+x_7), \; 
             (x_2+x_3)(x_8+x_9)(x_{10}+x_{11}), \\
         &\qquad (x_4+x_5)(x_6+x_7)(x_8+x_9)(x_{10}+x_{11})\big), \\
N_{y,I} &= \big((x_4+x_5)(x_6+x_7)(x_8+x_9)(x_{10}+x_{11})\big),
\end{align*}
and we have a geometric vertex decomposition
$$\mathrm{in}_y(I) = C_{y,I} \cap (N_{y,I} + (y)).$$

The ideal $N_{y,I}$ is principal and generated by a squarefree polynomial, 
hence geometrically vertex decomposable.
We now show that $C_{y,I}$ is geometrically vertex decomposable.
The generators of $C_{y,I}$ form a Gr\"obner basis
with respect to the lexicographical order
$x_2 > x_3 > \cdots > x_{11}$.
Set $y' = x_2$ and rewrite the generators of $C_{y,I}$ as $x_2^{\delta_i} q_i + r_i$:
\begin{align*}
(x_2+x_3)(x_4+x_5)(x_6+x_7) &= x_2 \cdot (x_4+x_5)(x_6+x_7) 
                                + x_3(x_4+x_5)(x_6+x_7), \\
(x_2+x_3)(x_8+x_9)(x_{10}+x_{11}) &= x_2 \cdot (x_8+x_9)(x_{10}+x_{11}) 
                                + x_3(x_8+x_9)(x_{10}+x_{11}), \\
(x_4+x_5)(x_6+x_7)(x_8+x_9)(x_{10}+x_{11}) &= x_2^0 \cdot 
                                (x_4+x_5)(x_6+x_7)(x_8+x_9)(x_{10}+x_{11}).
\end{align*}
This gives:
\begin{align*}
C_{y',C_{y,I}} &= \big((x_4+x_5)(x_6+x_7), \; (x_8+x_9)(x_{10}+x_{11})\big), \\
N_{y',C_{y,I}} &= \big((x_4+x_5)(x_6+x_7)(x_8+x_9)(x_{10}+x_{11})\big).
\end{align*}
Again, the ideal $N_{y',C_{y,I}}$ is principal and generated by a squarefree polynomial, and hence geometrically 
vertex decomposable.   Moreover, one can verify that
$$\mathrm{in}_{y'}(I) = C_{y',{C_{y,I}}} \cap (N_{y',C_{y,I}} + (y)).$$

Since the generators of $C_{y',C_{y,I}}$ are 
in disjoint sets of variables, 

we have
\footnotesize
$$k[x_3,\ldots,x_{11}]/C_{y',C_{y,I}} \cong
k[x_4,\ldots,x_7]/((x_4+x_5)(x_6+x_7))\otimes k[x_8,\ldots,x_{11}]/((x_8+x_9)(x_{10}+x_{11})).$$
\normalsize
Because the ideal $((x_4+x_5)(x_6+x_7))$,
respectively $((x_8+x_9)(x_{10}+x_{11}))$, is
geometrically vertex decomposable in $k[x_4,\ldots,x_7]$,
respectively $k[x_8,\ldots,x_{11}]$, the ideal
$C_{y',C_{y,I}}$ is now geometrically vertex decomposable
by \cite[Theorem 2.9]{CDSRVT}.  Consequently, the ideal
$I$ is geometrically vertex decomposable, as desired.
\end{ex}

We conclude with some preliminary steps towards answering Question \ref{openquestion}.  In particular,
we show that some of our earlier arguments can be easily
adapted to general star configurations to give
examples of ideals that fail to be geometrically vertex decomposable
or Knutson.  We require the following generalization
of Proposition \ref{hvec}:

\begin{prop}{\cite[Proposition 2.3]{GHMN2017}}\label{gens}
  Let $\mathbb{Z}(\ell,c) \subseteq \mathbb{P}^n$ be a 
  general star configuration of codimension $c$ associated to
  $\mathbb F = \{F_1,\ldots,F_\ell \}$.
  Then the minimal generators of 
  $I_{\mathbb{Z}(\ell,c)}$ are
$$\{F_{j_1}\cdots F_{j_{\ell-c+1}} ~|~ \{j_1,\ldots,j_{\ell-c+1}\} \subseteq [\ell]\}.$$ 
\end{prop}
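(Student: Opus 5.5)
We would prove Proposition \ref{gens} by the same strategy used for linear star configurations in \cite{GHM13}: induction on $\ell$ via the decomposition of Lemma \ref{induction}, which holds verbatim with $F_i$ in place of $L_i$. Its proof used only the modular law $I\cap(J+K)=(I\cap J)+K$ for $K\subseteq I$, together with the identity
$$\bigcap_\sigma\bigl((F_\ell)+J_\sigma\bigr)=(F_\ell)+\bigcap_\sigma J_\sigma .$$
In the linear case that identity came from $F_\ell$ being a variable after a change of coordinates. In general it must be justified from the codimension hypothesis, namely that any $c+1$ of the $F_i$ form a regular sequence (codimension $c+1$ in a polynomial ring forces this, since the ideal is then a complete intersection). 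So the first step is to establish
$$I_{\mathbb{Z}(\ell,c)}=\bigl(I_{\mathbb{Z}(\ell-1,c)}\cap(F_\ell)\bigr)+I_{\mathbb{Z}(\ell-1,c-1)},$$
where the smaller configurations are built from $\{F_1,\dots,F_{\ell-1}\}$ and inherit the regular-sequence hypothesis.

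Next we would identify each piece inductively. By induction, $I_{\mathbb{Z}(\ell-1,c-1)}$ is generated by the products of $\ell-c+1$ of the $F_1,\dots,F_{\ell-1}$. For the other summand we claim
$$I_{\mathbb{Z}(\ell-1,c)}\cap(F_\ell)=F_\ell\, I_{\mathbb{Z}(\ell-1,c)}.$$
This amounts to showing that $F_\ell$ is a nonzerodivisor modulo $I_{\mathbb{Z}(\ell-1,c)}$. The associated primes of $I_{\mathbb{Z}(\ell-1,c)}$ are associated primes of the complete intersections $(F_{j_1},\dots,F_{j_c})$, which are unmixed of codimension $c$. Since $(F_{j_1},\dots,F_{j_c},F_\ell)$ has codimension $c+1$, $F_\ell$ lies in none of them. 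By induction, $F_\ell\, I_{\mathbb{Z}(\ell-1,c)}$ is generated by $F_\ell$ times products of $\ell-c$ of the others. Combining the two pieces gives exactly all products of $\ell-c+1$ distinct $F_j$'s. The base cases are $c=1$, where the ideal is $\bigcap(F_j)=(F_1\cdots F_\ell)$ because the $F_j$ are pairwise coprime, and $\ell=c$, where the ideal is $(F_1,\dots,F_c)$.

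The main obstacle is the modular-type identity $\bigcap_\sigma((F_\ell)+J_\sigma)=(F_\ell)+\bigcap_\sigma J_\sigma$. We would first try to prove it by passing to $R/(F_\ell)$. Since $F_\ell$ together with any $c-1$ others is a regular sequence, and in fact any $c$ of the remaining forms stay a regular sequence modulo $F_\ell$, the images of $F_1,\dots,F_{\ell-1}$ give a star-type configuration of codimension $c-1$ in $R/(F_\ell)$. It then suffices to show the intersection commutes with reduction mod $F_\ell$. We would do this by a Hilbert function comparison: the intersection modulo $F_\ell$ is always contained in the intersection of the images, and equality would follow by matching Hilbert functions, computed from the explicit generators given by the inductive hypothesis.

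Finally, minimality of the generators follows by a degree argument. The products are pairwise of comparable degree when the $d_i$ are equal, and in general no product $F_{j_1}\cdots F_{j_{\ell-c+1}}$ lies in the ideal generated by the others. To see this, localize at a minimal prime of $(F_{i_1},\dots,F_{i_{c}})$, where $\{i_1,\dots,i_c\}$ is the complementary index set of size $c-1$ plus one; there the other products vanish to higher order.
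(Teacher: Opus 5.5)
The paper gives no proof of this statement; it is quoted from \cite{GHMN2017}, so your argument has to stand on its own. Its skeleton is sound. The modular-law decomposition, the observation that any $c+1$ of the $F_i$ form a regular sequence, and the nonzerodivisor step $I_{\mathbb{Z}(\ell-1,c)}\cap(F_\ell)=F_\ell\, I_{\mathbb{Z}(\ell-1,c)}$ are all correct.

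\textbf{The gap.} It sits exactly at the step you flag, $\bigcap_\sigma\bigl((F_\ell)+J_\sigma\bigr)=(F_\ell)+I_{\mathbb{Z}(\ell-1,c-1)}$, and the Hilbert-function comparison you propose cannot be carried out. Only the containment $\supseteq$ is free. For equality you must bound from below the Hilbert function of $R/\bigcap_\sigma(F_\sigma,F_\ell)$, i.e.\ control $\bigcap_\sigma \bar J_\sigma$ inside $\bar R=R/(F_\ell)$. That is a codimension $c-1$ star configuration in a hypersurface ring. Your inductive hypothesis is a statement about polynomial rings, so it yields neither generators nor a Hilbert function for this intersection. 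In the linear case this is exactly what the change of coordinates buys: $R/(L_\ell)\cong k[x_1,\dots,x_n]$, so the induction closes. Note also that knowing generators of $I_{\mathbb{Z}(\ell-1,c-1)}$ does not by itself give its Hilbert function.

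\textbf{The repair.} Strengthen the statement so the induction can be run in $\bar R$. Prove it for homogeneous $F_1,\dots,F_\ell$ of positive degree in any Noetherian positively graded $k$-algebra $A$ in which any $\min\{c+1,\ell\}$ of them form a regular sequence, inducting on $\ell$ for all $A$ and $c$ simultaneously. Your other steps survive in this setting:
\begin{itemize}
\item homogeneous regular sequences may be permuted;
\item $F_\ell$ is regular on each $A/(F_\tau)$ with $\tau\subseteq[\ell-1]$, $|\tau|=c$, and $A/I_{\mathbb{Z}(\ell-1,c)}$ embeds in $\bigoplus_\tau A/(F_\tau)$, so no unmixedness is needed;
\item the case $c=1$ is just the inductive step with $I_{\mathbb{Z}(\ell-1,0)}=(0)$, so no unique factorization is needed.
\end{itemize}
Any $c$ of $\bar F_1,\dots,\bar F_{\ell-1}$ form a regular sequence in $\bar A=A/(F_\ell)$. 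So the hypothesis for $(\bar A,\ell-1,c-1)$ says $\bigcap_\sigma(\bar F_\sigma)$ is generated by the images of the products of $\ell-c+1$ of $F_1,\dots,F_{\ell-1}$. This is precisely the needed identity, with no Hilbert functions at all.

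\textbf{Minimality.} Your argument is false as stated. For $\ell=3$, $c=2$, $G=F_1F_2$, localizing at a minimal prime of $(F_1,F_3)$ makes $F_2F_3$ a unit times $F_3$, which does not vanish to higher order than $G$. Instead, set $J=\{j_1,\dots,j_{\ell-c+1}\}$. Every other product has a factor $F_m$ with $m\notin J$, so it lies in $\mathfrak a=(F_m: m\notin J)$. But $G$ is a nonzerodivisor on $A/\mathfrak a$, since each $F_j$ with $j\in J$, together with the $c-1$ generators of $\mathfrak a$, is a regular sequence. Hence $G\notin\mathfrak a$.
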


The next result gives a partial generalization of Lemma \ref{var2}.

\begin{lemma}
\label{lem:generalized_4.2}
Let $\mathbb{Z}(\ell,c) \subseteq \mathbb{P}^n$ be a general star configuration of codimension $c$
associated to $\mathbb F = \{F_1, \ldots, F_\ell\}$ with $d_i = \deg(F_i)$,
$d_1 \leq d_2 \leq \cdots \leq d_\ell$.
If for each variable $x_i \in \{x_0, \ldots, x_n\}$, the product
$F_1 F_2 \cdots F_{\ell-c+1}$ is not squarefree in $x_i$, 
then $I_{\mathbb{Z}(\ell,c)}$ is not geometrically vertex decomposable.
\end{lemma}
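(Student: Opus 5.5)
The plan is to mimic the proof of Lemma \ref{var2}, replacing Proposition \ref{hvec} (2) with Proposition \ref{gens}. By Lemma \ref{lemma:squarefree}, if $I = I_{\mathbb{Z}(\ell,c)}$ admitted a geometric vertex decomposition with respect to some $y = x_i$, then $I$ would have a Gr\"obner basis that is squarefree in $y$. Once we rule this out for every variable, no geometric vertex decomposition exists. Case (a) of Definition \ref{dfGVD} is also excluded: $I$ is neither $(1)$ nor generated by variables, since its minimal generators are products of $\ell-c+1\geq 1$ forms, and the hypothesis forces that product to be non-squarefree. Hence $I$ would not be geometrically vertex decomposable.

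So fix a variable $x_i$. By Proposition \ref{gens}, the minimal generators of $I$ are the products $F_{j_1}\cdots F_{j_{\ell-c+1}}$. Each such product is homogeneous of degree $d_{j_1}+\cdots+d_{j_{\ell-c+1}}$. Because $d_1\leq \cdots\leq d_\ell$, the smallest of these degrees is $d = d_1+\cdots+d_{\ell-c+1}$, and it is attained by $G = F_1F_2\cdots F_{\ell-c+1}$.

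Next I check that $d$ is the minimal degree of a nonzero element of $I$. Any nonzero homogeneous $g\in I$ is an $R$-combination of the minimal generators, all of degree $\geq d$, so $\deg g\geq d$. Since $I$ is homogeneous, its minimal nonzero degree is attained by a homogeneous element, so $d$ is exactly the minimal degree.

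By hypothesis $G$ is not squarefree in $x_i$. Lemma \ref{mindeg} then gives that no Gr\"obner basis of $I$ is squarefree in $x_i$. Since $x_i$ was arbitrary, Lemma \ref{lemma:squarefree} excludes a geometric vertex decomposition with respect to any variable, which finishes the argument.

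The main point requiring care is the minimality of $\deg G$, which is exactly why the degree ordering $d_1\leq\cdots\leq d_\ell$ appears in the statement: other generators may share the degree $d$, but none can be smaller. A second subtle point is in Lemma \ref{mindeg}: it requires $f$ to be an element of minimal degree, so that it is a $k$-linear combination of the degree-$d$ Gr\"obner basis elements. With $f = G$ this holds, because a monomial $x_i^2 m$ in the support of $G$ must then appear in some basis element.
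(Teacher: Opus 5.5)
Your proof is correct and follows the paper's argument essentially verbatim: Proposition \ref{gens} and the degree ordering make $F_1\cdots F_{\ell-c+1}$ an element of minimal degree, and then Lemma \ref{mindeg} together with Lemma \ref{lemma:squarefree} rules out a geometric vertex decomposition with respect to every variable. You also explicitly exclude case (a) of Definition \ref{dfGVD}, using that $\deg G\geq 2$; the paper leaves this step implicit.
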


\begin{proof}
Fix any monomial order $<$. By Proposition \ref{gens}, 
the product $F_1 F_2 \cdots F_{\ell-c+1}$ is a minimal
generator of $I_{Z(\ell,c)}$ of degree $d = d_1 + d_2 + \cdots + d_{\ell-c+1}$.
Since $d_1 \leq \cdots \leq d_\ell$, the smallest
degree of a minimal generator of
of $I_{Z(\ell,c)}$ is $d$.

By hypothesis, for each variable $x_i$, the polynomial $F_1 \cdots F_{\ell-c+1}$ is not squarefree in $x_i$, that is, $x_i^2$ divides some term of
$F_1 \cdots F_{\ell-c+1}$. 
Consequently, by Lemma \ref{mindeg}, there does not exist a Gr\"{o}bner basis for $I_{\mathbb{Z}(\ell,c)}$  that is squarefree in $x_i$. 

Since this holds for every variable, Lemma \ref{lemma:squarefree} implies that $I_{\mathbb{Z}(\ell,c)}$
does not admit a geometric vertex decomposition with respect to any variable,
and hence $I_{Z(\ell,c)}$ is not geometrically vertex decomposable.
\end{proof}

Similarly, the proof of Theorem \ref{kn l>n+1} can be adapted
to give a condition for the ideal of a generalized star configuration
to fail to be a Knutson ideal.

\begin{thm} \label{not knutson}
    Let $\mathbb{Z}(\ell,c) \subseteq \mathbb{P}^n$ be a 
  general star configuration of codimension $c$ associated to
  $\mathbb F = \{F_1,\ldots,F_\ell \}$.  
  Let $d_i = \deg(F_i)$ and assume that $d_1 \leq d_2 \leq 
  \cdots \leq d_\ell$.    
  Suppose that  $d_1=\cdots =  d_t$ for some $\ell \geq t > \lceil \frac{n+1}{d_1} \rceil$. Then
  $I_{\mathbb{Z}(\ell,c)}$ is not Knutson.
\end{thm}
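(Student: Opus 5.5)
The plan is to adapt the proof of Theorem~\ref{kn l>n+1}. I will show that for \emph{every} monomial order $<$ the initial ideal ${\rm in}_<(I_{\mathbb{Z}(\ell,c)})$ fails to be squarefree. Theorem~\ref{initial ideal} then rules out $I_{\mathbb{Z}(\ell,c)} \in C_f$ for any $f$ whose leading term is squarefree with respect to some order. Throughout I assume $\ell \geq c+1$, which is implicit in Definition~\ref{defn:genstarconfig}, where subsets of $c+1$ elements of $\mathbb{F}$ are considered. In particular every minimal generator in Proposition~\ref{gens} is a product of $\ell-c+1\geq 2$ of the forms $F_i$.

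Fix a monomial order $<$ and look at the monomials $m_i = {\rm in}_<(F_i)$ for $1\leq i\leq t$, each of degree $d_1$. The first step is a pigeonhole argument: there is an index $i\le t$ with $m_i$ not squarefree, or two indices $i\neq j$ in $\{1,\dots,t\}$ with $\gcd(m_i,m_j)\neq 1$. Indeed, if $m_1,\dots,m_t$ were squarefree and pairwise coprime, they would involve $t\,d_1$ distinct variables among $x_0,\dots,x_n$. That forces $t \leq (n+1)/d_1 \leq \lceil \frac{n+1}{d_1}\rceil$, contradicting the hypothesis on $t$.

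Next I build a generator $G$ of minimal degree. Let $d = d_1+\cdots+d_{\ell-c+1}$; by Proposition~\ref{gens} and the ordering $d_1\le\cdots\le d_\ell$, this is the smallest degree of a minimal generator, hence of any nonzero element of $I$. Choose $G$ according to the case:
\begin{itemize}
\item if two indices $i\neq j\le t$ have $\gcd(m_i,m_j)\ne 1$, let $G = F_iF_j$ times $\ell-c-1$ further forms of smallest degree among the remaining ones;
\item if instead some $m_i$ with $i \leq t$ is not squarefree, let $G = F_i$ times $\ell-c$ further forms of smallest degree among the remaining ones.
\end{itemize}
Since $d_i=d_j=d_1$ is the minimal degree, swapping these forms in for members of $\{F_1,\dots,F_{\ell-c+1}\}$ does not change the total degree, so $\deg G = d$. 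Moreover ${\rm in}_<(G)=\prod {\rm in}_<(F_k)$ is not squarefree in either case.

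Finally I must show that ${\rm in}_<(I)$ is not squarefree, and this is the step needing care. Suppose ${\rm in}_<(I)$ were squarefree. Then ${\rm in}_<(G)$ would be divisible by some squarefree minimal generator $u$ of ${\rm in}_<(I)$, and $u \neq {\rm in}_<(G)$, so $\deg u < d$. But $I$ is homogeneous with no nonzero elements of degree less than $d$, so ${\rm in}_<(I)$ contains no monomials of degree less than $d$. This contradiction shows ${\rm in}_<(I)$ is not squarefree for any $<$. Hence by Theorem~\ref{initial ideal}, $I_{\mathbb{Z}(\ell,c)}$ is not Knutson.
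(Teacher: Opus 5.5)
Your proof is correct and follows essentially the same route as the paper: a pigeonhole argument on the leading monomials of $F_1,\dots,F_t$, a generator of the minimal degree $d_1+\cdots+d_{\ell-c+1}$ whose leading term is not squarefree, and then Theorem~\ref{initial ideal}. Your version is slightly more careful than the paper's in four ways:
\begin{itemize}
\item Your pigeonhole also covers the case where a single ${\rm in}_<(F_i)$ is not squarefree, which the paper's ``greatest common divisor bigger than one'' phrasing overlooks.
\item Choosing the remaining factors of smallest degree merges the paper's two cases $t\ge \ell-c+1$ and $t<\ell-c+1$.
\item You state explicitly the assumption $\ell\ge c+1$, which the paper uses only implicitly.
\item You spell out the degree argument showing that ${\rm in}_<(I)$ cannot be squarefree.
\end{itemize}
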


\begin{proof}
By Proposition \ref{gens},
    it follows that the ideal $I_{\mathbb{Z}(\ell,c)}$ is generated by
    elements of  degrees $d_1+\cdots+d_{\ell-c+1}$ and higher.
    Fix any monomial order $<$.  By the pigeonhole
    principle, there is $F_i$ and $F_j$ among
    $F_1,\ldots,F_t$ that such that the leading
    term of $F_i$ and $F_j$ must have a greatest 
    common divisor bigger than one, or in other
    words, the leading term of $F_iF_j$ is not squarefree.

    We now consider two cases: (1) $ t \geq \ell-c+1$ and
    (2) $t < \ell -c+1$.   
    If $t \geq \ell -c+1$, then 
    $$F_iF_jF_{k_3}\cdots F_{k_\ell-c+1} 
    ~~\mbox{with $\{k_3,\ldots,k_{\ell-c+1}\}
    \subseteq [t] \setminus \{i,j\}$}$$
    is a generator of degree $d_1+\cdots+d_{\ell-c+1}$   whose leading term is divisible by the leading term     of $F_iF_j$, and thus is not squarefree.  We are
    using the fact that the first $t > \ell -c+1$ elements     of $\mathbb{F}$ all have the same degree.

    If $t < \ell -c+1$, then $F_1 \cdots F_i \cdots F_j \cdots
    F_tF_{t+1}\cdots F_{\ell-c+1}$ is a generator of
    smallest degree, whose leading term is divisible by the leading
    term of $F_iF_j$, and thus, the leading term is not 
    squarefree.  

    In both cases, there is a generator of $I_{\mathbb{Z}(\ell,c)}$
    of smallest degree whose initial term is not squarefree.  Since
    this is true for all monomial orders, the initial ideal
    of $I_{\mathbb{Z}(\ell,c)}$ cannot be squarefree, and thus,
    by Theorem \ref{initial ideal}, the ideal $I_{\mathbb{Z}(\ell,c)}$
    cannot be Knutson.
\end{proof}

%%%%%%%%%%%%%%%%%%%%%%%%%%%%%%%%%%%%%%%%%
\subsection*{Acknowledgments}
%%%%%%%%%%%%%%%%%%%%%%%%%%%%%%%%%%%%%%%%%

The results in this paper were inspired
by computer experiments using {\it Macaulay2} \cite{M2} and 
the package {\tt GeometricDecomposabilty} \cite{CVT24}.
Work on this project was
carried out at the Fields Institute in Toronto, Canada
as part of the ``Thematic Program on Commutative Algebra
and Applications'' in 2025.   All of the authors thank
Fields for providing a wonderful environment in which to work and for 
funding to participate in this program. 
We thank Jenna Rajchgot for comments and informing us
about the examples we used in Remark \ref{gvd+knutson}.
Cooper's research is supported by NSERC Discovery Grant 2024-05444.
Guardo was partially supported by Gnsaga of Indam, by 
Progetto Piaceri 2024-26, Università di Catania, and by 
PRIN 2022, “$0$-dimensional schemes, Tensor Theory and 
applications” – funded by the European Union 
Next Generation EU, Mission 4, Component 2 -- CUP: E53D23005670006. 
Marangone gratefully acknowledges that this research was 
supported by the Pacific Institute for the Mathematical Sciences. 
Van Tuyl’s research is supported by NSERC Discovery Grant 2024-05299.
\bibliographystyle{amsplain}
\bibliography{bibfile}

\end{document}